\newtheorem{theorem} {Theorem}
\newtheorem{lemma} {Lemma}
\newtheorem{definition} {Definition}
\newtheorem{observation} {Observation}
\def\x{{\mathbf{x}}}
\def\v{{\mathbf{v}}}
\def\w{{\mathbf{w}}}
\def\y{{\mathbf{y}}}
\def\X{{\mathbf{X}}}
\def\Y{{\mathbf{Y}}}
\def\A{{\mathbf{A}}}
\def\M{{\mathbf{M}}}
\def\I{{\mathbf{I}}}
\def\B{{\mathbf{B}}}
\def\V{{\mathbf{V}}}
\def\Z{{\mathbf{Z}}}
\def\W{{\mathbf{W}}}
\def\P{{\mathbf{P}}}
\def\EV{{\mathbf{EV}}}
\def\matE{{\mathbf{E}}}
\newcommand{\mK}{\mathcal{K}}
\newcommand{\mS}{\mathcal{S}}
\newcommand{\ball}{\mathcal{NB}}
\newcommand{\mbS}{\mathbb{S}}
\newcommand{\E}{\mathbb{E}}
\newcommand{\gap}{\textrm{gap}}
\newcommand{\nnz}{\textrm{nnz}}
\newcommand{\trace}{\textrm{Tr}}
\newcommand{\rank}{\textrm{rank}}
\newcommand{\reals}{\mathbb{R}}
\title{Faster Projection-free Convex Optimization over the Spectrahedron}
\date{}
\author{Dan Garber \\
Toyota Technological Institute at Chicago \\ 
\small{dgarber@ttic.edu}
}
\begin{document}

\maketitle
 
\begin{abstract}
Minimizing a convex function over the spectrahedron, i.e., the set of all $d\times d$ positive semidefinite matrices with unit trace, is an important optimization task with many applications in optimization, machine learning, and signal processing, the most notable one probably being \textit{matrix completion}. Unfortunately, it is also notoriously difficult to solve in large-scale since standard techniques require to compute expensive matrix decompositions on each iteration. An alternative, is the conditional gradient method (aka Frank-Wolfe algorithm) that regained much interest in recent years, mostly due to its application to this specific setting. The key benefit of the CG method is that it avoids expensive matrix decompositions all together, and simply requires a single eigenvector computation per iteration, which is much more efficient. On the downside, the CG method, in general, converges with an inferior rate. The error for minimizing a $\beta$-smooth function after $t$ iterations scales like $\beta/t$. This convergence rate does not improve even if the function is also strongly convex.

In this work we present a modification of the CG method tailored for convex optimization over the spectrahedron. The per-iteration complexity of the method is essentially identical to that of the standard CG method: only a single eigenvecor computation is required. For minimizing an $\alpha$-strongly convex and $\beta$-smooth function, the \textit{expected} approximation error of the method after $t$ iterations is:
\begin{eqnarray*}
O\left({\min\{\frac{\beta{}}{t} ,\left({\frac{\beta\sqrt{\rank(\X^*)}}{\alpha^{1/4}t}}\right)^{4/3}, \left({\frac{\beta}{\sqrt{\alpha}\lambda_{\min}(\X^*)t}}\right)^{2}\}}\right),
\end{eqnarray*}
where $\rank(\X^*), \lambda_{\min}(\X^*)$ are the rank of the optimal solution, and smallest non-zero eigenvalue, respectively. Beyond the significant improvement in convergence rate,  it also follows that when the optimum is low-rank, our method provides better accuracy-rank tradeoff than the standard CG method.

To the best of our knowledge, this is the first result that attains provably faster convergence rates for a CG variant for optimization over the spectrahedron. We also present encouraging preliminary empirical evidence, that shows that our approach may improve also in practice over previous projection-free methods.
\end{abstract}

\section{Introduction}

Minimizing a convex function over the set of positive semidefinite matrices with unit trace, aka the spectrahedron, is an important optimization task which lies at the heart of many optimization, machine learning, and signal processing tasks such as matrix completion \cite{Candes09, Recht11, Jaggi10}, metric learning \cite{Xing03,Weinberger05,Ying12}, kernel matrix learning \cite{Lanckriet04,Gonen11}, multiclass classification \cite{Dudik12a,Zhang12,Hazan16}, and more.

Since modern applications are mostly of very large scale, first-order methods are the obvious choice to deal with this optimization problem. However, even these are notoriously difficult to apply, since most of the popular gradient schemes require the computation of an orthogonal projection on each iteration to enforce feasiblity, which for the spectraheron, amounts to computing a full eigen-decomposition of a real symmetric matrix. Such a decomposition requires $O(d^3)$ arithmetic operations for a $d\times d$ matrix, and thus is prohibitive for high-dimensional problems. An alternative is to use first-order methods that do not require expensive decompositions, but rely only on computationally-cheap leading eigenvector computations. These methods are mostly based on the conditional gradient method, also known as the Frank-Wolfe algorithm \cite{FrankWolfe, Jaggi13b}, which is a generic method for constrained convex optimization given an oracle for minimizing linear functions over the feasible domain. Indeed, linear minimization over the spectrahedron amounts to a single leading eigenvector computation. While the CG method has been discovered already in the 1950's \cite{FrankWolfe, Polyak}, it has regained much interest in recent years in the machine learning and optimization communities, in particular due to its applications to semidefinite optimization and convex optimization with a nuclear norm constraint / regularization\footnote{minimizing a convex function subject to a nuclear norm constraint is efficiently reducible to the minimization of the function over the spectrahedron, as we detail in Subsection \ref{sec:nuclear2spectra}.}, e.g., \cite{Hazan08, Jaggi10, Laue12, ShalevShwartz11,Ying12, Dudik12a, Dudik12b, Hazan12, Hazan16}. This regained interest is not surprising: while a full eigen-decomposition for $d\times d$ matrix requires $O(d^3)$ arithmetic operations, leading eigenvecor computations can be carried out, roughly speaking, in worst-case time that is only linear in the number of non-zeros in the input matrix multiplied by either $\epsilon^{-1}$ for the popular Power Method or by $\epsilon^{-1/2}$ for the more efficient Lanczos method, where $\epsilon$ is the target accuracy. These running times improve exponentially to only depend on $\log(1/\epsilon)$ when the eigenvalues of the input matrix are well distributed \cite{EigenvaluesApprox}. Indeed, in several important machine learning applications, such as matrix completion, the CG method requires eigenvector computations of very sparse matrices \cite{Jaggi10}. Also, very recently, new eigenvector algorithms with significantly improved performance guarantees were introduced which are applicable for matrices with certain popular structure \cite{Garber15, Sidrord15,Shamir15}.

Because of their cheap iteration complexity, conditional gradient-based methods are also of interest in online optimization settings, such as \textit{online convex optimization} or \textit{online stochastic optimization}, in which, roughly speaking, given a continuos stream of data, one wants to incrementally update the prediction / hypothesis based on newly observed data. In these settings the time required for the optimization method to perform a single update may be a key consideration in its applicability to the problem \cite{Hazan12, Garber13, Garber13b}.

The main drawback of the CG method is that its convergence rate is, in general, inferior compared to projection-based gradient methods. The convergence rate for minimizing a smooth function, roughly speaking, scales only like $1/t$. In particular, in general, this rate does not improve, even when the function is also strongly convex. On the other hand, the convergence rate of optimal projection-based methods, such as Nesterov's accelerated gradient method, scales like $1/t^2$ for smooth functions, and can be improved exponentially to $\exp(-\Theta(t))$ when the objective is also strongly convex \cite{Nesterov13}.

Very recently, several successful attempts were made to devise natural modifications of the CG method that retain the overall low per-iteration complexity, while enjoying provably faster convergence rates, usually under a strong-convexity assumption, or a slightly weaker one. These results exhibit provably-faster rates for optimization over polyhedral sets \cite{Garber13b,Jaggi13c,Beck15} and strongly-convex sets \cite{GH15}, but do not apply to the spectrahedron. For the specific setting considered in this work, several heuristic improvements of the CG method were suggested which show promising empirical evidence, however, non of them provably improve over the rate of the standard CG method \cite{ShalevShwartz11, Laue12, Freund15}.

In this work, we present, a new non-trivial variant of the CG method, which, to the best of our knowledge, is the first one to exhibit provably faster convergence rates for optimization over the spectrahedron under standard smoothness and strong convexity assumptions. The per-iteration complexity of the method is essentially identical to that of the standard CG method in this setting, i.e., only a single leading eigenvector computation per iteration is required.

Our method is tailored for optimization over the spectrahedron, and can be seen as a certain hybridization of the standard CG method and the projected gradient method. From a high-level view, we take advantage of the fact  that solving a $\ell_2$-regularized linear problem over the set of extreme points of the spectrahedron is equivalent to linear optimization over this set, i.e., amounts to a single eigenvector computation. We then show via a novel and non-trivial analysis, that includes new decomposition concepts for positive semidefinite matrices, that such an algorithmically-cheap regularization is sufficient, in presence of strong convexity, to derive faster convergence rates.

While the combination of smoothness and strong convexity is a rare commodity, several important problems such as linear regression in the well-conditioned case, and solving undetermined linear systems (such as in the matrix completion problem), under certain conditions (see for instance  \cite{Negahban09}), exhibit such properties. Moreover, since computing the euclidean projection is a smooth and strongly convex optimization problem with respect to the $\ell_2$ norm, our method can be readily used to simulate any $\ell_2$-projection-based algorithm, replacing the projection step with only a leading eigenvector step. This approach has allowed, among other things, to apply CG-based methods to non-smooth problems, for which the standard CG method is not suitable \cite{Garber13b}, and to strike better trade-offs between the linear optimization oracle complexity and the first-order oracle complexity \cite{Lan14,Hazan16}

\begin{table*}[t!]\label{table:compare}
\begin{center}
  \begin{tabular}{| l | c | c | }
    \hline
    Method &  \#iterations to $\epsilon$ error & Iteration complexity \\ \hline
    Proj. Grad. & $\frac{\beta}{\alpha}\log(1/\epsilon)$ & $d^3$  \\ \hline
    Acc. Grad. & $\sqrt{\frac{\beta}{\alpha}}\log(1/\epsilon)$ & $d^3$   \\ \hline
    Cond. Grad. & $\beta/\epsilon$ & $\nnz(\nabla)\sqrt{\Vert{\nabla}\Vert_2}\min\{\epsilon^{-1/2}, \, \frac{\log(1/\epsilon)}{\gap(\nabla)}\}$\\ \hline
    Algorithm \ref{alg:1} & $\min\{\frac{\beta}{\epsilon}, \, \frac{\beta\sqrt{\rank(\X^*)}}{\alpha^{1/4}\epsilon^{3/4}}, \, \frac{\beta}{\sqrt{\alpha}\lambda_{\min}(\X^*)\sqrt{\epsilon}}\}$ & $(\nnz(\nabla)+d)\sqrt{\Vert{\tilde{\nabla}}\Vert_2}\min\{\epsilon^{-1/2}, \, \frac{\log(1/\epsilon)}{\sqrt{\gap(\tilde{\nabla})}}\}$ \\ \hline
  \end{tabular}
  \caption{Comparison between first-order methods for minimizing an $\alpha$-strongly convex and $\beta$-smooth function over the spectrahedron in $\reals^{d\times d}$. $\nnz(\nabla)$ is number of non-zeros in the gradient matrix in any of the algorithm's iterations, and $\gap(\nabla)$ is the difference between the smallest and second smallest eigenvalues of the gradient. For Algorithm \ref{alg:1}, we use the notation $\tilde{\nabla}$ since the gradient is perturbed with a small rank-one matrix.  The running times for the eigenvector computation are based on the Lanczos method \cite{EigenvaluesApprox}.} 
\end{center}
\end{table*}

\subsection{Paper organization}

The rest of this paper is organized as follows. In Section \ref{sec:prelim} we give necessary preliminaries and notation, describe the problem considered in this paper in full detail, and draw known connections to the popular problem of convex optimization under a nuclear norm constraint. In Section \ref{sec:approach} we briefly describe the conditional gradient and projected gradient methods for optimization over the spectrahedron, and present our new method, which is a certain hybridization of the two. We also state the main theorem of this paper, Theorem \ref{thm:main}, which describes the novel convergence rate of the proposed method. In Section \ref{sec:analysis} we analyze our proposed method and prove the main theorem, Theorem \ref{thm:main}. Finally, in Section \ref{sec:experiments} we present preliminary empirical evidence that shows that our method may indeed improve in practice over previous conditional gradient methods.

\section{Preliminaries and Notation}\label{sec:prelim}
Throughout this work we use boldface lowercase letters to denote vectors in $\reals^d$, e.g. $\v$, boldface uppercase letters to denote matrices, e.g. $\X$, and lightface letters to denote scalars. For vectors we let $\Vert\cdot\Vert$ denote the standard Euclidean norm, while for matrices we let $\Vert\cdot\Vert$ denote the spectral norm, $\Vert{\cdot}\Vert_F$ denote the Frobenius norm, and $\Vert\cdot\Vert_*$ denote the nuclear norm.  
We denote by $\mbS_d$ the space of $d\times d$ real symmetric matrices, and by $\mS_d$ the \textit{spectrahedron} in $\mbS_d$, i.e., 
\begin{eqnarray*}
\mS_d := \{\X \in \mbS_d \, | \, \X\succeq 0, \trace(\X) = 1\} .
\end{eqnarray*}

We let $\trace(\cdot)$ and $\rank(\cdot)$ denote the trace and rank of a given matrix in $\mbS_d$, respectively. We let $\bullet$ denote the standard inner-product for matrices. Given a matrix $\X\in\mS_d$, we let $\lambda_{\min}(\X)$ denote the smallest non-zero eigenvalue of $\X$.

%We also denote the Nuclear-norm ball of radius $\theta$ in $\reals^{d_1\times d_2}$ by $\ball_{d_1,d_2}(\theta)$, i.e., 
%\begin{eqnarray*}
%\ball_{d_1,d_2}(\theta) := \{\Z\in\reals^{d_1\times d_2} \, | \, \Vert{\Z}\Vert_* = \sum_{i=1}^{\min\{d_1,d_2\}}\sigma_i(\Z) \leq \theta\},
%\end{eqnarray*}
%where we let $\sigma(\Z)$ denote the vector of singular values of $\X$.

Throughout this work, given a matrix $\A\in\mbS_d$, we denote by $\EV(\A)$ an eigenvector of $\A$ that corresponds to the largest (signed) eigenvalue of $\A$, i.e., $\EV(\A)\in\arg\max_{\v : \Vert{\v}\Vert = 1}\v^{\top}\A\v$. Given a scalar $\xi > 0$, we also denote by $\EV_{\xi}(\A)$ an $\xi$-approximation to the largest (in terms of eigenvalue) eigenvector of $\A$, i.e., $\EV_{\xi}(\A)$ returns a unit vector $\v$ such that $\v^{\top}\A\v \geq \lambda_{\max}(\A) - \xi$.

%\begin{definition}
%We say that a function $f(\X):\mbS_d\rightarrow\mathbb{R}$ is Lipschitz with parameter $L$ over the set $\mK$ if for all $x,y\in\mK$ it holds that
%\begin{eqnarray*}
%\vert{f(x) - f(y)}\vert \leq L\Vert{x-y}\Vert .
%\end{eqnarray*}
%\end{definition}

\begin{definition}
We say that a function $f(\X):\reals^{m\times n}\rightarrow\reals$ is $\alpha$-strongly convex w.r.t. a norm $\Vert\cdot\Vert$,
if for all $\X,\Y\in\reals^{m\times n}$ it holds that
\begin{eqnarray*}
f(\Y) \geq f(\X) + (\Y-\X)\bullet\nabla{}f(\X) + \frac{\alpha}{2}\Vert{\X-\Y}\Vert^2 .
\end{eqnarray*}
\end{definition}

\begin{definition}
We say that a function $f(\X):\reals^{m\times n}\rightarrow\reals$ is $\beta$-smooth w.r.t. a norm $\Vert\cdot\Vert$, if for all $\X,\Y\in\reals^{m\times n}$ it holds that
\begin{eqnarray*}
f(\Y) \leq f(\X) + (\Y-\X)\bullet\nabla{}f(\X) + \frac{\beta}{2}\Vert{\X-\Y}\Vert^2 .
\end{eqnarray*}
\end{definition}

The first-order optimality condition implies that for a $\alpha$-strongly convex $f$, if $\X^*$ is the unique minimizer of $f$ over a convex set $\mK\subset\reals^{m\times n}$, then for all $\X\in\mK$ it holds that
\begin{eqnarray}\label{eq:strongconvexdist}
f(\X) - f(\X^*) \geq \frac{\alpha}{2}\Vert{\X-\X^*}\Vert^2 .
\end{eqnarray}

%Note that a sufficient condition for a twice-differential function $f$ to be $\beta$-smooth and $\sigma$-strongly convex over a domain $\mathcal{K}$ is that
%\begin{eqnarray*}
%\forall{x\in\mathcal{K}}: \quad \beta\textrm{I} \succeq \nabla^2f(x) \succeq \sigma\textrm{I}.
%\end{eqnarray*}

\subsection{Problem setting}

The main focus of this work is the following optimization problem:
\begin{eqnarray}\label{eq:spectraOptProb}
\min_{\X\in\mS_d}f(\X),
\end{eqnarray}
where we assume that $f(\X)$ is both $\alpha$-strongly convex and $\beta$-smooth w.r.t. $\Vert\cdot\Vert_F$. We denote the (unique) minimizer of $f$ over $\mS_d$ by $\X^*$. 

%We will also consider the following optimization problem:

%\begin{eqnarray}\label{eq:nuclearOptProb} 
%\min_{\Z\in\ball_{d_1,d_2}(\theta)}f(\Z),
%\end{eqnarray}
%where we again assume that $f(\Z)$ is both $\alpha$-strongly convex and $\beta$-smooth w.r.t. $\Vert\cdot\Vert_F$. As before, we denote the (unique) optimal solution of \eqref{eq:nuclearOptProb} by $\Z^*$.

\subsection{Convex optimization with a nuclear norm constraint}\label{sec:nuclear2spectra}
An important optimization problem highly-related to Problem \eqref{eq:spectraOptProb}, is the problem of minimizing a convex function over the set of $d_1\times d_2$ real-valued matrices with bounded nuclear norm, i.e,

\begin{eqnarray}\label{eq:nuclearOptProb} 
\min_{\Z\in\ball_{d_1,d_2}(\theta)}f(\Z).
\end{eqnarray}
Here we let $\ball_{d_1,d_2}(\theta)$ denote the nuclear-norm ball of radius  $\theta$ in $\reals^{d_1\times d_2}$, i.e.,
\begin{eqnarray*}
\ball_{d_1,d_2}(\theta) := \{\Z\in\reals^{d_1\times d_2} \, | \, \Vert{\Z}\Vert_* := \sum_{i=1}^{\min\{d_1,d_2\}}\sigma_i(\Z) \leq \theta\},
\end{eqnarray*}
where we let $\sigma(\Z)$ denote the vector of singular values of $\Z$.

Problem \eqref{eq:nuclearOptProb} could be directly formulated as convex optimization over the spectrahedron. Towards this end, consider now the following convex optimization problem:
\begin{eqnarray*}%\label{eq:nuclearOpt2SpectraOpt} 
&\min_{\X\in\mS_{d_1+d_2}}  \hat{f}(\X), & \nonumber \\
&\hat{f}(\X):=f(2\theta\cdot\M_1\X\M_2), \quad 
\M_1 := \left( \begin{array}{cc}
\I_{d_1} & \textbf{0}_{d_1\times{}d_2}  \end{array} \right), \quad
\M_1 := \left( \begin{array}{c}
\textbf{0}_{d_1\times{}d_2}  \\
\I_{d_2}  \end{array} \right) . &
\end{eqnarray*}

The following Lemma, whose proof can be found in \cite{Jaggi10}, shows the equivalence between the two problems.

\begin{lemma}
Let $\X\in\mS_{d_1+d_2}$ such that $\hat{f}(\X) - \hat{f}(\X^*) = \epsilon$, for some $\epsilon >0$, where $\X^*$ is the minimizer of $\hat{f}$ over $\mS_{d_1+d_2}$. Consider the following factorization of $\X$:
\begin{eqnarray*}
&\X & = \left( \begin{array}{cc}
\X_1 & \X_2 \\
\X_2^{\top} & \X_3  \end{array} \right),
\end{eqnarray*}
where $\X_1$ is $d_1\times d_1$, $\X_2$ is $d_1 \times d_2$, and $\X_3$ is $d_2\times d_2$. Define $\Z := 2\theta\cdot\X_2$.
Then it follows that $\Z\in\ball_{d_1,d_2}(\theta)$, and $f(\Z) - f(\Z^*) = \epsilon$, where $\Z^*$ is the minimizer of $f$ over $\ball_{d_1,d_2}(\theta)$.
\end{lemma}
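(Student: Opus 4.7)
The plan is to establish the equivalence by showing that the map $\Phi : \mS_{d_1+d_2} \to \reals^{d_1\times d_2}$ defined by $\Phi(\X) := 2\theta\cdot \M_1 \X \M_2 = 2\theta\X_2$ satisfies $\Phi(\mS_{d_1+d_2}) = \ball_{d_1,d_2}(\theta)$. Since $\hat{f}(\X) = f(\Phi(\X))$ by construction, this set equality immediately yields $\min_{\X\in\mS_{d_1+d_2}}\hat{f}(\X) = \min_{\Z\in\ball_{d_1,d_2}(\theta)}f(\Z)$, and in particular $\hat{f}(\X^*) = f(\Z^*)$. Combined with $f(\Z) = \hat{f}(\X)$, this gives the claimed equality $f(\Z) - f(\Z^*) = \hat{f}(\X) - \hat{f}(\X^*) = \epsilon$.

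The first containment $\Phi(\mS_{d_1+d_2}) \subseteq \ball_{d_1,d_2}(\theta)$, which also directly proves $\Z\in\ball_{d_1,d_2}(\theta)$, is the step I expect to be the technical heart of the argument. I will show that any PSD block matrix of trace $1$ has top-right block of nuclear norm at most $1/2$. Taking a compact SVD $\X_2 = \U\bSigma\V^\top$ with orthonormal $\U,\V$ and diagonal $\bSigma$ of singular values $\sigma_i$, I plug the test vectors $\w_i := (\U_i; -\V_i)$ into the PSD constraint to get
\begin{eqnarray*}
0 \;\leq\; \w_i^\top \X \w_i \;=\; \U_i^\top \X_1 \U_i - 2\sigma_i + \V_i^\top \X_3 \V_i,
\end{eqnarray*}
so $\sigma_i \leq \tfrac{1}{2}(\U_i^\top\X_1\U_i + \V_i^\top\X_3\V_i)$. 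Summing and using orthonormality together with PSD-ness of $\X_1$ and $\X_3$ gives $\|\X_2\|_* = \sum_i \sigma_i \leq \tfrac{1}{2}(\trace(\X_1)+\trace(\X_3)) = \tfrac{1}{2}\trace(\X) = \tfrac{1}{2}$, hence $\|\Phi(\X)\|_* = 2\theta\|\X_2\|_* \leq \theta$.

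The reverse containment $\ball_{d_1,d_2}(\theta) \subseteq \Phi(\mS_{d_1+d_2})$ is the standard lifting construction. Given $\Y\in\ball_{d_1,d_2}(\theta)$, write the compact SVD $\Y/(2\theta) = \U\bSigma\V^\top$, so that $\trace(\bSigma) = \|\Y\|_*/(2\theta) \leq 1/2$. Define
\begin{eqnarray*}
\widetilde{\X} \;:=\; \begin{pmatrix}\U\\ \V\end{pmatrix}\bSigma\begin{pmatrix}\U^\top & \V^\top\end{pmatrix},
\end{eqnarray*}
which is PSD by construction, has top-right block exactly $\U\bSigma\V^\top = \Y/(2\theta)$, and satisfies $\trace(\widetilde{\X}) = 2\trace(\bSigma) \leq 1$. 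To pin down the trace to exactly $1$, add $(1-\trace(\widetilde{\X}))\cdot \e_1\e_1^\top$ with $\e_1$ supported on the first $d_1$ coordinates; this keeps the matrix PSD, does not alter the top-right block, and produces an $\X_\Y\in\mS_{d_1+d_2}$ with $\Phi(\X_\Y)=\Y$.

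Combining both containments gives $\Phi(\mS_{d_1+d_2}) = \ball_{d_1,d_2}(\theta)$, and the chain of equalities above closes the proof. The only nontrivial ingredient is the PSD block SVD bound in the first paragraph; the rest is a direct consequence of the identity $\hat{f} = f\circ \Phi$.
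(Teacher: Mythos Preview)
The paper does not actually prove this lemma; it simply states ``whose proof can be found in \cite{Jaggi10}'' and moves on. So there is no in-paper argument to compare against.

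Your proof is correct and is essentially the standard argument for the equivalence between the nuclear-norm ball and the spectrahedron via the SDP lifting. The forward containment uses the well-known fact that for a PSD block matrix the nuclear norm of the off-diagonal block is bounded by half the trace; your SVD-plus-test-vector argument is one of the cleanest ways to see this. Two small points worth making explicit: (i) you use that $\X_1$ and $\X_3$ are PSD, which holds because they are principal submatrices of the PSD matrix $\X$; (ii) when you pass from $\sum_i \U_i^\top \X_1 \U_i$ to $\trace(\X_1)$ you are implicitly extending the orthonormal columns $\U_i$ to a full orthonormal basis and dropping non-negative terms. Both are routine, but stating them closes the argument completely. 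The reverse containment via the rank-one lift $\begin{pmatrix}\U\\\V\end{pmatrix}\bSigma\begin{pmatrix}\U^\top & \V^\top\end{pmatrix}$ plus trace correction is exactly the construction used in Jaggi's thesis and related work, so your approach aligns with the cited reference.
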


\section{Our Approach}\label{sec:approach}

In order to better communicate our ideas, we begin by briefly describing the conditional gradient and projected-gradient methods, pointing out their advantages and short-comings for solving Problem \eqref{eq:spectraOptProb} in Subsection \ref{sec:approach:old}. We then present our new method which is a certain combination of ideas from both methods in Subsection \ref{sec:approach:new}.

\subsection{Conditional gradient and projected gradient descent}\label{sec:approach:old}

The standard conditional gradient algorithm is detailed below in Algorithm \ref{alg:cg}.

\begin{algorithm}
\caption{Conditional Gradient}
\label{alg:cg}
\begin{algorithmic}[1]
\STATE input: sequence of step-sizes $\{\eta_t\}_{t\geq 1}\subset[0,1]$
%\STATE let $\x_0$ be an arbitrary unit vector
%\STATE $\X_1 \gets \x_1\x_1^{\top}$ such that $\x_1\gets\EV(-\nabla{}f(\x_0\x_0^{\top}))$
\STATE let $\X_1$ be an arbitrary matrix in $\mS_d$
\FOR{$t = 1...$}
\STATE $\v_t \gets \EV\left({-\nabla{}f(\X_t)}\right)$
\STATE $\X_{t+1} \gets \X_t + \eta_t(\v_t\v_t^{\top}-\X_t)$
\ENDFOR
\end{algorithmic}
\end{algorithm}

Let us denote the approximation error of Algorithm \ref{alg:cg} after performing $t$ iterations by $h_t := f(\X_t) - f(\X^*)$.

The convergence result of Algorithm \ref{alg:cg} is based on the following simple observations:
\begin{eqnarray}\label{eq:oldCGanalysis}
h_{t+1} &=& f(\X_t + \eta_t(\v_t\v_t^{\top}-\X_t)) - f(\X^*) \nonumber \\
&\leq & h_t + \eta_t(\v_t\v_t^{\top} - \X_t)\bullet \nabla{}f(\X_t) + \frac{\eta_t^2\beta}{2}\Vert{\v_t\v_t^{\top}-\X_t}\Vert_F^2 \nonumber \\
&\leq & h_t + \eta_t(\X^* - \X_t)\bullet \nabla{}f(\X_t) + \frac{\eta_t^2\beta}{2}\Vert{\v_t\v_t^{\top}-\X_t}\Vert_F^2 \nonumber \\
&\leq & (1-\eta_t)h_t  + \frac{\eta_t^2\beta}{2}\Vert{\v_t\v_t^{\top}-\X_t}\Vert_F^2,
\end{eqnarray}
where the first inequality follows from the $\beta$-smoothness of $f(\X)$, the second one follows for the optimal choice of $\v_t$, and the third one follows from convexity of $f(\X)$. Unfortunately, while we expect the error $h_t$ to rapidly converge to zero, the term $\Vert{\v_t\v_t^{\top}-\X_t}\Vert_F^2$ in Eq. \eqref{eq:oldCGanalysis}, in principal, might remain as large as the diameter of $\mS_d$, which, given a proper choice of step-size $\eta_t$, results in the well-known convergence rate of $O(\beta/t)$ \cite{Jaggi13b,Hazan08}. This consequence holds also in case $f(\X)$ is not only smooth, but also strongly-convex, see for instance Lemma 21 in \cite{JaggiThesis}.

However, in case $f$ is strongly convex, a non-trivial  modification of Algorithm \ref{alg:cg} can lead to a much faster convergence rate. In this case, it follows from Eq. \eqref{eq:strongconvexdist}, that on any iteration $t$, $\Vert{\X_t - \X^*}\Vert_F^2 \leq \frac{2}{\alpha}h_t$. Thus, if we consider replacing the choice of $\X_{t+1}$ in Algorithm \ref{alg:cg} with the following update rule:
\begin{eqnarray}\label{eq:expensiveCGrule}
\V_t \gets {\arg\min}_{\V\in\mS_d}\V \bullet \nabla{}f(\X_t) + \frac{\eta_t\beta}{2}\Vert{\V_t - \X_t}\Vert_F^2, \qquad \X_{t+1} \gets \X_t + \eta_t(\V_t - \X_t),
\end{eqnarray}
then, following basically the same steps as in Eq. \eqref{eq:oldCGanalysis}, we will have that
\begin{eqnarray}\label{eq:projectionCGanalysis}
h_{t+1} &\leq &  h_t+ \eta_t(\X^* - \X_t)\bullet \nabla{}f(\X_t) + \frac{\eta_t^2\beta}{2}\Vert{\X^*-\X_t}\Vert_F^2 \leq \left({1 - \eta_t + \frac{\eta_t^2\beta}{\alpha}}\right)h_t,
\end{eqnarray}
and thus by a proper choice of $\eta_t$, a linear convergence rate will be attained.
Of course the issue now, is that computing $\V_t$ is no longer a computationally-cheap leading eigenvalue problem (in particular $\V_t$ is not rank-one), but requires a full eigen-decomposition of $\X_t$, which is much more expensive. In fact, the update rule in Eq. \eqref{eq:expensiveCGrule} is nothing more than the projected gradient decent method, which, in spite of the linear convergence rate, is inefficient for large-scale matrix problems because of the need to compute expansive decompositions.

\subsection{A new hybrid approach: rank one-regularized conditional gradient algorithm}\label{sec:approach:new}

At the heart of our new method is the combination of ideas from both of the above approaches: on one hand, solving a certain regularized linear problem in order to avoid the shortcomings of the CG method, i.e., slow convergence rate, and on the other hand, maintaining the simple structure of a leading eigenvalue computation that avoids the shortcoming of the computationally-expensive projected-gradient method. 

Towards this end, suppose that have an explicit decomposition of the current iterate $\X_t = \sum_{i=1}^ka_i\x_i\x_i^{\top}$, where $k$ is an integer, $(a_1,a_2,...,a_k)$ is a probability distribution over $[k]$, and each $\x_i$ is a unit vector. Note in particular that the standard CG method (Algorithm \ref{alg:cg}) naturally produces such an explicit decomposition of $\X_t$. Consider now, the update rule in Eq. \eqref{eq:expensiveCGrule}, but with the additional restriction that $\V_t$ is rank one, i.e,
\begin{eqnarray}\label{eq:towardsNewUpdate}
\V_t \gets {\arg\min}_{\V\in\mS_d, \, \rank(\V)=1}\V \bullet \nabla{}f(\X_t) + \frac{\eta_t\beta}{2}\Vert{\V - \X_t}\Vert_F^2.
\end{eqnarray}
Note that in this case it follows that $\V_t$ is a unit trace rank-one matrix which corresponds to the leading eigenvector of the matrix $-\nabla{}f(\X_t) + \eta_t\beta\X_t$. However, when $\V_t$ is simply rank-one, the regularization $\Vert{\V_t-\X_t}\Vert_F^2$ makes little sense in general, since unless $\X^*$ is rank-one, we do not expect $\X_t$ to be such. Note however, that if $\X^*$ is rank one, then this modification will already result in a linear convergence rate. However, we can think of solving a set of decoupled component-wise regularized problems:
\begin{eqnarray}\label{eq:towardsNewUpdate2}
\forall i\in[k]: \quad \v_t^{(i)} &\gets& {\arg\min}_{\Vert{\v}\Vert=1}\v^{\top}\nabla{}f(\X_t)\v + \frac{\eta_t\beta}{2}\Vert{\v\v^{\top} - \x_i\x_i^{\top}}\Vert_F^2 \nonumber \\
&& \equiv \EV\left({-\nabla{}f(\X_t) + \eta_t\beta\x_i\x_i^{\top}}\right) \nonumber \\
\X_{t+1} &\gets &  \sum_{i=1}^ka_i\left({(1-\eta_t)\x_i\x_i + \eta_t\v_t^{(i)}\v_t^{(i)\top}}\right).
\end{eqnarray}

Following the lines of Eq. \eqref{eq:oldCGanalysis}, we will now have that

\begin{eqnarray}\label{eq:distributedCGanalysis}
h_{t+1} &\leq &h_t + \eta_t\sum_{i=1}^ka_i(\v_t^{(i)}\v_t^{(i)\top} - \x_i\x_i^{\top})\bullet \nabla{}f(\X_t) + \frac{\eta_t^2\beta}{2}\Vert{\sum_{i=1}^ka_i(\v_t^{(i)}\v_t^{(i)\top}-\x_i\x_i^{\top})}\Vert_F^2 \nonumber \\
&\leq &h_t + \eta_t\sum_{i=1}^ka_i(\v_t^{(i)}\v_t^{(i)\top} - \x_i\x_i^{\top})\bullet \nabla{}f(\X_t) + \frac{\eta_t^2\beta}{2}\sum_{i=1}^ka_i\Vert{\v_t^{(i)}\v_t^{(i)\top}-\x_i\x_i^{\top}}\Vert_F^2 \nonumber \\
& = &  h_t + \eta_t\E_{i\sim(a_1,...,a_k)}\left[{(\v_t^{(i)}\v_t^{(i)\top}- \x_i\x_i^{\top})\bullet \nabla{}f(\X_t) + \frac{\eta_t\beta}{2}\Vert{\v_t^{(i)}\v_t^{(i)\top}-\x_i\x_i^{\top}}\Vert_F^2}\right],
\end{eqnarray}
where the second inequality follows from convexity of the squared Frobenius norm, and the last equality follows since $(a_1,...,a_k)$ is a probability distribution over $[k]$.

While the approach in Eq. \eqref{eq:towardsNewUpdate2} relies only on leading eigenvector computations, the benefit in terms of potential convergence rates is not trivial, except for the case in which $\rank(\X^*) =1$ (then, by previous arguments, it is equivalent to computing the projection), since it is not immediate that we can get non-trivial bounds for the individual distances $\Vert{\v_t^{(i)}\v_t^{(i)\top}-\x_i\x_i^{\top}}\Vert_F$. Indeed, the main novelty in our analysis is dedicated precisely to this issue. 

A motivation, if any, is that there might exists a decomposition of $\X^*$ as $\X^* = \sum_{i=1}^kb_i\x^{*(i)}\x^{*(i)\top}$, which is close in some sense to the decomposition of $\X_t$. We can then think of the regularized problem in Eq. \eqref{eq:towardsNewUpdate2}, as an attempt to push each individual component $\x^{(i)}$ towards its corresponding component in the decomposition of $\X^*$, and as an overall result, bring the following iterate $\X_{t+1}$ closer to $\X^*$.

Note that Eq. \eqref{eq:distributedCGanalysis} implicitly describes a randomized algorithm in which, instead of solving a regularized EV problem for each rank-one matrix in the decomposition of $\X_t$, which is expensive as this decomposition grows large with the number of iterations, we pick a single rank-one component according to its weight in the decomposition, and only update it. This directly brings us to our proposed algorithm, Algorithm \ref{alg:1}, which is given below.

\begin{algorithm}
\caption{Randomized Rank one-regularized Conditional Gradient}
\label{alg:1}
\begin{algorithmic}[1]
\STATE input: sequence of step-sizes $\{\eta_t\}_{t\geq 1}$, sequence of error tolerances $\{\xi_t\}_{t\geq 0}$
\STATE let $\x_0$ be an arbitrary unit vector
\STATE $\X_1 \gets \x_1\x_1^{\top}$ such that $\x_1\gets\EV_{\xi_0}(-\nabla{}f(\x_0\x_0^{\top}))$
\FOR{$t = 1...$}
\STATE suppose $\X_t$ is given by $\X_t = \sum_{i=1}^ka_i\x_i\x_i^{\top}$, where each $\x_i$ is a unit vector, and $(a_1,a_2,...,a_k)$ is a probability distribution over $[k]$, for some integer $k$. 
\STATE pick $i_t\in[k]$ according to the probability distribution $(a_1,a_2,...a_k)$
\STATE set a new step-size $\tilde{\eta}_t$ as follows:
\begin{eqnarray*}
\tilde{\eta}_t \gets \left\{ \begin{array}{ll}
         \eta_t/2 & \mbox{if $a_{i_t} \geq \eta_t$}\\
        a_{i_t} & else \end{array} \right.
\end{eqnarray*}
\STATE $\v_t \gets \EV_{\xi_t}\left({-\nabla{}f(\X_t) + \eta_t\beta\x_{i_t}\x_{i_t}^{\top}}\right)$
\STATE $\X_{t+1} \gets \X_t + \tilde{\eta}_t(\v_t\v_t^{\top}-\x_{i_t}\x_{i_t}^{\top})$
\ENDFOR
\end{algorithmic}
\end{algorithm}

We have the following guarantee for Algorithm \ref{alg:1} which is the main result of this paper.
\begin{theorem}\label{thm:main}[Main Theorem]
Consider the sequence of step-sizes $\{\eta_t\}_{t\geq 1}$ defined by $\eta_t = 18/(t+8)$, and suppose that $\xi_0 = \beta$ and for any iteration $t\geq 1$ it holds that 
\begin{eqnarray*}
\xi_t = O\left({\min\{\frac{\beta{}}{t} ,\left({\frac{\beta\sqrt{\rank(\X^*)}}{\alpha^{1/4}t}}\right)^{4/3}, \left({\frac{\beta}{\sqrt{\alpha}\lambda_{\min}(\X^*)t}}\right)^{2}\}}\right) .
\end{eqnarray*} 
%\xi_t \leq \frac{1}{6}\min\{\frac{12\beta{}}{t} ,\left({\frac{20\beta\sqrt{\rank(\X^*)}}{\alpha^{1/4}t}}\right)^{4/3}, \left({\frac{6\sqrt{6}\beta}{\sqrt{\alpha}\lambda_{\min}(\X^*)t}}\right)^{2}\} .
%\end{eqnarray*} 
Then, all iterates of Algorithm \ref{alg:1} are feasible, and
\begin{eqnarray*}
\forall t\geq 1: \quad \E\left[{f(\X_t) - f(\X^*)}\right] = O\left({\min\{\frac{\beta{}}{t} ,\left({\frac{\beta\sqrt{\rank(\X^*)}}{\alpha^{1/4}t}}\right)^{4/3}, \left({\frac{\beta}{\sqrt{\alpha}\lambda_{\min}(\X^*)t}}\right)^{2}\}}\right).
%\forall t\geq 1: \quad \E\left[{f(\X_t) - f(\X^*)}\right] \leq \min\{\frac{12\beta{}}{t} ,\left({\frac{20\beta\sqrt{\rank(\X^*)}}{\alpha^{1/4}t}}\right)^{4/3}, \left({\frac{6\sqrt{6}\beta}{\sqrt{\alpha}\lambda_{\min}(\X^*)t}}\right)^{2}\}.
\end{eqnarray*}
\end{theorem}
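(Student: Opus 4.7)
The plan is to chain three pieces: (i) a per-iteration recursion derived from smoothness and the optimality of the regularized eigenvector step, (ii) a decomposition lemma that couples the running convex combination of $\X_t$ with some decomposition of $\X^*$, and (iii) an induction that closes up the three separate rates.

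First I would set up the recursion. Feasibility of $\X_{t+1}$ follows from the two-case choice of $\tilde{\eta}_t\le a_{i_t}$, which guarantees the coefficient in front of $\x_{i_t}\x_{i_t}^\top$ stays non-negative while the weight on $\v_t\v_t^\top$ is added. Given feasibility, $\beta$-smoothness yields
\[
h_{t+1}\leq h_t+\tilde{\eta}_t(\v_t\v_t^\top-\x_{i_t}\x_{i_t}^\top)\bullet\nabla f(\X_t)+\frac{\tilde{\eta}_t^2\beta}{2}\Vert \v_t\v_t^\top-\x_{i_t}\x_{i_t}^\top\Vert_F^2.
\]
Since $\tilde{\eta}_t\le\eta_t$, replacing one factor of $\tilde{\eta}_t$ by $\eta_t$ in the quadratic term, and then using the $\xi_t$-approximate optimality of $\v_t$ against any competing unit vector $\u$, lets me substitute any rank-one comparator: for every unit $\u$,
\[
h_{t+1}\leq h_t+\tilde{\eta}_t\Bigl[(\u\u^\top-\x_{i_t}\x_{i_t}^\top)\bullet\nabla f(\X_t)+\tfrac{\eta_t\beta}{2}\Vert\u\u^\top-\x_{i_t}\x_{i_t}^\top\Vert_F^2\Bigr]+O(\tilde{\eta}_t\xi_t).
\]
Taking expectation over $i_t\sim(a_1,\ldots,a_k)$ on the branch $\tilde{\eta}_t=\eta_t/2$ (the drop-branch $\tilde{\eta}_t=a_{i_t}<\eta_t$ is handled analogously and is strictly helpful), and choosing comparators $\u_i$ so that $\sum_i a_i \u_i\u_i^\top=\X^*$, convexity of $f$ converts the linear term into $-h_t$, yielding
\[
\E[h_{t+1}\mid\X_t]\leq\bigl(1-\tfrac{\eta_t}{2}\bigr)h_t+\tfrac{\eta_t^2\beta}{4}D_t+O(\eta_t\xi_t),\qquad D_t:=\sum_i a_i\Vert\u_i\u_i^\top-\x_i\x_i^\top\Vert_F^2.
\]

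The crux is the decomposition lemma that controls $D_t$. Given any current decomposition $\X_t=\sum_i a_i\x_i\x_i^\top\in\mS_d$ and the fixed PSD unit-trace $\X^*$, I need to construct unit vectors $\u_1,\ldots,\u_k$ with $\sum_i a_i\u_i\u_i^\top=\X^*$ and to bound $D_t$ by an appropriate power of $\Vert\X_t-\X^*\Vert_F$. I expect three separate bounds, each giving one of the three rates: (a) the trivial bound $D_t\leq 2$ from $\Vert\u_i\u_i^\top-\x_i\x_i^\top\Vert_F^2\le 2$; (b) a \emph{rank-sensitive} bound of the form $D_t\leq C\sqrt{\rank(\X^*)}\sqrt{\Vert\X_t-\X^*\Vert_F}$, obtained by writing $\X^*$ in its spectral basis and performing a mass-transport style matching of each $\x_i$ with a convex combination of the $r$ top eigenvectors of $\X^*$; (c) a \emph{$\lambda_{\min}$-sensitive} bound of the form $D_t\leq C\,\Vert\X_t-\X^*\Vert_F/\lambda_{\min}(\X^*)$, exploiting the invertibility of $\X^*$ on its range to invert the coupling locally. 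This is the hard part, and I expect the paper's "novel decomposition concepts" to be devoted exactly to constructing these three couplings and proving the inequalities. The technical difficulty is that a component $\x_i$ may be essentially orthogonal to $\X^*$'s range, so the coupling must pay for re-orientation, and that cost is what forces the $\sqrt{r}$ or $1/\lambda_{\min}$ factor.

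Once these bounds on $D_t$ are in hand, I plug in $\Vert\X_t-\X^*\Vert_F^2\leq 2h_t/\alpha$ from strong convexity and obtain three respective per-iteration recursions in $\E[h_t]$. For each, I solve by standard induction with ansatz $\E[h_t]\leq C/t^p$: the schedule $\eta_t=18/(t+8)$ combined with $p=1$ absorbs the trivial branch, $p=4/3$ absorbs the rank-sensitive branch (balancing $\eta_t^2\sqrt{r}\,h_t^{1/4}$ against the $\eta_t h_t$ contraction), and $p=2$ absorbs the $\lambda_{\min}$ branch (balancing $\eta_t^2\sqrt{h_t}/\lambda_{\min}$ against the contraction). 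The tolerance schedule $\xi_t$ is chosen to dominate neither of the three dominant terms, which is precisely what the stated $\xi_t$ achieves. Finally, since all three bounds hold simultaneously for every $t$, taking the minimum gives the stated convergence rate, completing the proof.
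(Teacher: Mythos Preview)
Your high-level plan---recursion from smoothness, a coupling-type decomposition, then three separate inductions---matches the paper's architecture. But the decomposition step as you state it does not go through, and fixing it is exactly where the paper's work lies.

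You propose to ``choose comparators $\u_i$ so that $\sum_i a_i\u_i\u_i^\top=\X^*$'' with the \emph{given} weights $a_i$ and unit $\u_i$. In general no such vectors exist: by a Schur--Horn argument, $\X^*=\sum_i a_i\u_i\u_i^\top$ with unit $\u_i$ requires the eigenvalue vector of $\X^*$ (zero-padded) to majorize $(a_1,\ldots,a_k)$, which already fails at $t=1$ where $a_1=1$ but $\lambda_1(\X^*)<1$ whenever $\rank(\X^*)\ge 2$. Even when such a representation exists, your ``mass-transport style matching'' gives no mechanism for the claimed bounds on $D_t$; the difficulty you flag---an $\x_i$ nearly orthogonal to the range of $\X^*$---is precisely what blocks an equal-weight coupling.

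The paper's fix is to abandon equal weights. Its decomposition (Lemma~\ref{lem:optStrucutre2}) writes
\[
\X^*=\sum_{i=1}^k b_i\,\y_i\y_i^\top+\Bigl(\textstyle\sum_i(a_i-b_i)\Bigr)\W,\qquad b_i\le a_i,\quad \W\in\mS_d,
\]
where $\y_i$ is the \emph{normalized projection} of $\x_i$ onto the span of eigenvectors of $\X^*$ with eigenvalue at least $\tau$, and $b_i=a_i(1-\gamma)\Vert\P_{\X^*,\tau}\x_i\Vert^2$. In the per-iteration bound (Lemma~\ref{lem:fullUpdateBound}) each weight $a_i$ is then split as $b_i+(a_i-b_i)$: on the $b_i$ portion the comparator is $\y_i$, and on the $(a_i-b_i)$ portion the comparator is the ordinary CG direction $\w^*=\arg\min_{\Vert\w\Vert=1}\w^\top\nabla f(\X_t)\w$. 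The linear terms recombine to $(\X^*-\X_t)\bullet\nabla f(\X_t)$ because $\w^*\w^{*\top}\bullet\nabla f\le\W\bullet\nabla f$, which is what yields the $-h_t$ contraction; the quadratic term on the residual portion is bounded trivially, and both the residual mass $\sum_i(a_i-b_i)$ and the matched cost $\sum_i b_i\Vert\y_i\y_i^\top-\x_i\x_i^\top\Vert_F^2$ are controlled by $\sqrt{\rank(\X^*)}\bigl(\Vert\X^*\P_{\X^*,\tau}^\perp\Vert_F+\Vert\X_t-\X^*\Vert_F\bigr)+\gamma$ via Lemmas~\ref{lem:optStructure}--\ref{lem:closePairsDecompose}. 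Two different choices of $(\tau,\gamma)$ then give the $\sqrt{\rank(\X^*)\Vert\X_t-\X^*\Vert_F}$ and $\Vert\X_t-\X^*\Vert_F/\lambda_{\min}(\X^*)$ bounds you were aiming for. With this in hand, your steps (i) and (iii) are essentially correct and match Lemmas~\ref{lem:randomUpdateBound}--\ref{lem:recurs3}.
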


We now make several remarks regarding Algorithm \ref{alg:1} and Theorem \ref{thm:main}:
\begin{itemize}
\item
Observe that the feasibility of the iterates follows directly from the definition of $\tilde{\eta}_t$, since it is never allowed to exceed the corresponding coefficient $a_{i_t}$.
\item
None of the three bounds in Theorem \ref{thm:main} is better than the others for every value of $t$. In particular note that $\lambda_{\min}(\X^*)^{-1} \geq \rank(\X^*)$. Also, we note that the first $O(1/t)$ bound comes from the standard CG analysis.
\item
The dependency of the improved rates in Theorem \ref{thm:main} on $\rank(\X^*)$ and $\lambda_{\min}(\X^*)$ is not surprising since, in general, the standard $1/t$ rate of the CG method could not be improved without such additional dependencies. See for instance Section 7.4 in \cite{JaggiThesis}.
\item
The step-size choice in Theorem \ref{thm:main} does not require any knowledge on the parameters $\alpha,\beta, \rank(\X^*)$, and $\lambda_{\min}(\X^*)$. The knowledge of the smoothness parameter $\beta$ is required however for the computation of $\v_t$ on each iteration. While it follows from Theorem \ref{thm:main} that the knowledge of $\alpha,\rank(\X^*),\lambda_{\min}(\X^*)$ is needed to set the accuracy for the EV solver - $\xi_t$, in practice, iterative methods for eigenvector computation are very efficient and are much less sensitive to exact knowledge of parameters than the choice of step-size for instance. 
\item
While the eigenvalue problem solved on each iteration in Algorithm \ref{alg:1} is different from the one in the original CG algorithm (Algorithm \ref{alg:cg}), because of the additional term that depends on $\x_{i_t}\x_{i_t}^{\top}$, the efficiency of solving both EV problems is essentially the same. This follows since efficient EV procedures are based on iteratively multiplying  the desired input matrix $\M$ with some vector $\v$. In particular, multypling $\v$ with a rank-one matrix takes $O(d)$ time. Thus, as long as $\nnz(\nabla{}f(\X_t)) = \Omega(d)$, which is highly reasonable, it follows that both EV computations run in essentially the same time.

\item Aside from the computation of the gradient direction and the leading eigenvector computation, all other operations on any iteration $t$, can be carried out in $O(d^2 + t)$ additional time.

\item
Algorithm \ref{alg:1} does not directly use the input step-size sequence, but uses instead a modified sequence $\{\tilde{\eta}_t\}_{t\geq 1}$. This modification is made for clarity of the analysis. One can think of the sequence $\{\eta_t\}_{t\geq 1}$ as the sequence that we would like to use, however, we need to modify it a bit so on one hand, we can make sure that the iterates of the algorithm are indeed feasible at all times, and on the other hand, we can make sure that the algorithm can make sufficient progress on each iteration. %This is made more clear in the following observation.
\end{itemize}

\section{Analysis}\label{sec:analysis}

Throughout this section, given a matrix $\Y\in\mS_d$, we let $\P_{\Y,\tau}\in\mbS_d$ denote the projection matrix onto all eigenvectors of $\Y$ that correspond to eigenvalues of magnitude at least $\tau$. Similarly, we let $\P_{\Y,\tau}^{\perp}$ denote the projection matrix onto the eigenvectors of $\Y$ that correspond to eigenvalues of magnitude smaller that $\tau$ (including eigenvectors that correspond to zero-valued eigenvalues).

\subsection{A new decomposition for positive semidefinite matrices with locality proprieties}

The analysis of Algorithm \ref{alg:1} relies heavily on a new decomposition idea of matrices in $\mS_d$ that suggests that given a matrix $\X$ in the form of a convex combination of rank-one matrices: $\X = \sum_{i=1}^k\alpha_i\x_i\x_i^{\top}$, and another matrix $\Y\in\mS_d$, roughly speaking, we can decompose $\Y$ as the sum of rank-one matrices, such that the components in the decomposition of $\Y$ are close to those in the decomposition of $\X$ in terms of the overall distance $\Vert{\X-\Y}\Vert_F$. This decomposition and corresponding property justifies the idea of solving rank-one regularized problems, as suggested in Eq. \eqref{eq:towardsNewUpdate2}, and applied in Algorithm \ref{alg:1}.

In order to present this decomposition and its nice local proprieties, we first need two technical lemmas, and then we present the main lemma of this subsection, Lemma \ref{lem:optStrucutre2}, which gives the exact bounds that will be used in the convergence analysis of Algorithm \ref{alg:1}.

\begin{lemma}\label{lem:optStructure}
Let $\X,\Y\in\mS_d$. Let $\tau, \gamma\in[0,1]$ be scalars that satisfy $\frac{\gamma\tau}{1-\gamma}\geq \Vert{\X-\Y}\Vert_F$. %Let $\P_{\Y,\tau}\in\mbS_d$ be the a projection matrix onto the eigenvectors of $\Y$ with eigenvalue at least $\tau$. 
Then it holds that $\Y \succeq (1-\gamma)\P_{\Y,\tau}\X\P_{\Y,\tau}$.
\end{lemma}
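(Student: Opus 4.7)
}

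The plan is to reduce the inequality to a simple scalar condition on the range of $\P_{\Y,\tau}$, using the fact that $\Y$ commutes with its own spectral projection. First, since $\P_{\Y,\tau}$ and $\P_{\Y,\tau}^{\perp}$ are the spectral projections of $\Y$, we have $\P_{\Y,\tau}\Y\P_{\Y,\tau}^{\perp}=0$, and hence
\begin{eqnarray*}
\Y = \P_{\Y,\tau}\,\Y\,\P_{\Y,\tau} + \P_{\Y,\tau}^{\perp}\,\Y\,\P_{\Y,\tau}^{\perp}.
\end{eqnarray*}
The second summand is PSD (since $\Y\succeq 0$), so it suffices to prove the stronger statement
\begin{eqnarray*}
\P_{\Y,\tau}\,\Y\,\P_{\Y,\tau} \;\succeq\; (1-\gamma)\,\P_{\Y,\tau}\,\X\,\P_{\Y,\tau}.
\end{eqnarray*}

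Next I would rewrite the right-hand side via $\X = \Y + (\X-\Y)$, which turns the desired inequality into
\begin{eqnarray*}
\gamma\,\P_{\Y,\tau}\,\Y\,\P_{\Y,\tau} \;\succeq\; (1-\gamma)\,\P_{\Y,\tau}\,(\X-\Y)\,\P_{\Y,\tau}.
\end{eqnarray*}
By definition of $\P_{\Y,\tau}$, every nonzero eigenvalue of the left-hand operator is at least $\gamma\tau$, so $\gamma\,\P_{\Y,\tau}\Y\P_{\Y,\tau} \succeq \gamma\tau\,\P_{\Y,\tau}$ as PSD matrices on $\reals^d$ (both vanish on the orthogonal complement of the range of $\P_{\Y,\tau}$). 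For the right-hand side, the spectral norm satisfies $\Vert\P_{\Y,\tau}(\X-\Y)\P_{\Y,\tau}\Vert_2 \leq \Vert\X-\Y\Vert_2 \leq \Vert\X-\Y\Vert_F$, and since the matrix is supported on the range of $\P_{\Y,\tau}$, this yields
\begin{eqnarray*}
(1-\gamma)\,\P_{\Y,\tau}(\X-\Y)\P_{\Y,\tau} \;\preceq\; (1-\gamma)\Vert\X-\Y\Vert_F\,\P_{\Y,\tau}.
\end{eqnarray*}

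Finally, combining the two bounds, it is enough that $\gamma\tau \geq (1-\gamma)\Vert\X-\Y\Vert_F$, which is precisely the hypothesis $\tfrac{\gamma\tau}{1-\gamma}\geq\Vert\X-\Y\Vert_F$. Chaining the inequalities gives $\Y \succeq \P_{\Y,\tau}\Y\P_{\Y,\tau} \succeq (1-\gamma)\P_{\Y,\tau}\X\P_{\Y,\tau}$, as desired. The only subtle point (and the only place one can easily slip) is keeping track of the fact that the intermediate PSD inequalities live on the range of $\P_{\Y,\tau}$ but extend globally because every matrix involved annihilates the orthogonal complement; everything else is a one-line norm estimate.
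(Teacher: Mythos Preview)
Your proof is correct and follows essentially the same approach as the paper. The paper argues via quadratic forms---decomposing an arbitrary test vector as $\w=\w^++\w^-$ with $\w^+=\P_{\Y,\tau}\w$ and then bounding $\w^{+\top}\X\w^+$ by $\w^{+\top}\Y\w^+ + \Vert\X-\Y\Vert_F\Vert\w^+\Vert^2$ together with $\w^{+\top}\Y\w^+\geq\tau\Vert\w^+\Vert^2$---whereas you phrase the identical logic as operator inequalities ($\Y\succeq\P_{\Y,\tau}\Y\P_{\Y,\tau}\succeq\tau\P_{\Y,\tau}$ and $\P_{\Y,\tau}(\X-\Y)\P_{\Y,\tau}\preceq\Vert\X-\Y\Vert_F\P_{\Y,\tau}$); the substance is the same.
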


\begin{proof}
%Let us denote by $\P_{\Y,\tau}^{\perp}$ the projection matrix onto the eigenvectors of $\Y$ that correspond to eigenvalues smaller than $\tau$.
Given a vector $\w\in\reals^d$ let us write it as $\w = \w^+ + \w^-$ where $\w^+ = \P_{\Y,\tau}\w$ and $\w^- = \P_{\Y,\tau}^{\perp}\w = \w - \w^+$.

It holds that
\begin{eqnarray}\label{eq:lem:os:1}
\w^{\top}\Y\w &=& \w^{+\top}\Y\w^+ + \w^{-\top}\Y\w^- + 2\w^{-\top}\Y\w^+ \nonumber \\
&=& \w^{+\top}\Y\w^+ + \w^{-\top}\Y\w^- + 2\w^{\top}\P_{\Y,\tau}^{\perp}\Y\P_{\Y,\tau}\w \nonumber \\
&\geq & \w^{+\top}\Y\w^+ ,
\end{eqnarray}
where the inequality follows since $\P_{\Y,\tau}^{\perp}\Y\P_{\Y,\tau} = 0$ and $\Y$ is positive semidefinite.

Similarly, since $\P_{\Y,\tau}\w^- = 0$, we have that
\begin{eqnarray}\label{eq:lem:os:2}
\w^{-\top}\P_{\Y,\tau}\X\P_{\Y,\tau}\w^- = \w^{-\top}\P_{\Y,\tau}\X\P_{\Y,\tau}\w^+ = \w^{+\top}\P_{\Y,\tau}\X\P_{\Y,\tau}\w^- =0 .
\end{eqnarray}

Note also that 
\begin{eqnarray}\label{eq:lem:os:3}
\w^{+\top}\P_{\Y,\tau}\X\P_{\Y,\tau}\w^+ &=& \w^{+\top}\X\w^+ .
\end{eqnarray}

Thus, we have that
\begin{eqnarray*}
\w^{\top}\left[{(1-\gamma)\P_{\Y,\tau}\X\P_{\Y,\tau}}\right]\w &=& (1-\gamma)\w^{+\top}\P_{Y,\tau}\X\P_{\Y,\tau}w^+ \\
&=& (1-\gamma)\w^{+\top}\X\w^+ \\
&\leq & (1-\gamma)\left({\w^{+\top}\Y\w^+ + \Vert{\X-\Y}\Vert_F \cdot \Vert{\w^+}\Vert^2}\right) \\
& \leq & \w^{\top}\Y\w  + (1-\gamma)\Vert{\X-\Y}\Vert_F \cdot \Vert{\w^+}\Vert^2 - \gamma\w^{+\top}\Y\w^+ \\
&\leq &\w^{\top}\Y\w  + (1-\gamma)\Vert{\X-\Y}\Vert_F\cdot \Vert{\w^+}\Vert^2 - \gamma\tau\Vert{\w^+}\Vert^2,
\end{eqnarray*}
where the first equality follows from Eq. \eqref{eq:lem:os:2}, the second equality follows from Eq. \eqref{eq:lem:os:3}, the first inequality follows from the Cauchy-Schwarz ineq., the second inequality follows from Eq. \eqref{eq:lem:os:1}, and the last inequality follows from the definitions of $\w^+$ and $\tau$.

Thus, we can see that if $\frac{\gamma\tau}{1-\gamma} \geq \Vert{\X-\Y}\Vert_F$, the lemma follows.

\end{proof}

\begin{lemma}\label{lem:closePairsDecompose}
Let $\X,\Y\in\mS_d$ and suppose $\X$ is given in the form $\X = \sum_{i=1}^ka_i\x_i\x_i^{\top}$ where each $\x_i$ is a unit vector, and the weights $(a_1,...,a_k)$ are a distribution over $[k]$. Let $\P\in\mbS_d$ be a projection matrix onto a subset of the eigenvectors of $\Y$, and define for any $i\in[k]$, $\tilde{\x}_i := \P\x_i$. Then, it holds that
\begin{eqnarray*}
\sum_{i=1}^ka_i(1-\Vert{\tilde{\x}_i}\Vert^2) \leq \sqrt{\rank(\Y)}\Vert{\Y-\P\X\P}\Vert_F .
\end{eqnarray*}

\end{lemma}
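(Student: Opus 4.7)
The plan is to reduce the left-hand side to a single trace, and then bound that trace using a rank argument combined with Cauchy--Schwarz on the eigenvalues.

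First I would rewrite the sum as a trace. Since $\trace(\x_i\x_i^\top) = 1$ and the $a_i$'s form a probability distribution, $\sum_{i=1}^k a_i = \trace(\X) = 1$. Moreover, $\P\X\P = \sum_{i=1}^k a_i \P\x_i\x_i^\top\P = \sum_{i=1}^k a_i \tilde{\x}_i\tilde{\x}_i^\top$, so $\trace(\P\X\P) = \sum_{i=1}^k a_i\|\tilde{\x}_i\|^2$. Combining these, and using that $\Y\in\mS_d$ gives $\trace(\Y) = 1$, we get
\begin{equation*}
\sum_{i=1}^k a_i(1 - \|\tilde{\x}_i\|^2) = 1 - \trace(\P\X\P) = \trace(\Y) - \trace(\P\X\P) = \trace(\Y - \P\X\P).
\end{equation*}

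Next I would control the rank of the symmetric matrix $\A := \Y - \P\X\P$. Because $\P$ is a projection onto a subset of eigenvectors of $\Y$, its range is contained in the range of $\Y$, hence so is the range of $\P\X\P$. Therefore $\rank(\A) \leq \rank(\Y)$. Since $\A$ is symmetric, it admits an eigendecomposition with at most $\rank(\Y)$ nonzero eigenvalues $\lambda_1,\dots,\lambda_{\rank(\Y)}$; applying Cauchy--Schwarz to $(\lambda_j)$ against the all-ones vector of length $\rank(\A)$ yields
\begin{equation*}
\trace(\A) \;=\; \sum_{j} \lambda_j \;\leq\; \sqrt{\rank(\A)} \cdot \sqrt{\textstyle\sum_j \lambda_j^2} \;=\; \sqrt{\rank(\A)}\,\|\A\|_F \;\leq\; \sqrt{\rank(\Y)}\,\|\Y - \P\X\P\|_F.
\end{equation*}
Chaining this with the identity from the first step concludes the proof. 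Note the left-hand side of the claim is nonnegative since $\|\tilde{\x}_i\| = \|\P\x_i\| \leq \|\x_i\| = 1$, so sign issues do not arise.

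The argument is mostly routine; the only nontrivial observation is the rank bound $\rank(\Y - \P\X\P) \leq \rank(\Y)$, which crucially uses the hypothesis that $\P$ projects onto eigenvectors of $\Y$ (rather than being an arbitrary projection) so that $\mathrm{range}(\P) \subseteq \mathrm{range}(\Y)$. Without this, one would only get $\rank(\Y - \P\X\P) \leq \rank(\Y) + \rank(\P)$, which would yield a weaker bound of $\sqrt{\rank(\Y) + \rank(\P)}$ in place of $\sqrt{\rank(\Y)}$.
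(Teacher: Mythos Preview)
Your proof is correct and follows essentially the same approach as the paper's: both identify $\sum_i a_i(1 - \Vert\tilde{\x}_i\Vert^2)$ with $\trace(\Y - \P\X\P)$ and then bound this trace by $\sqrt{\rank(\Y)}\,\Vert\Y - \P\X\P\Vert_F$ via Cauchy--Schwarz together with the rank bound $\rank(\Y-\P\X\P)\le\rank(\Y)$. Your version is slightly more streamlined, working directly at the level of traces and the inequality $\vert\trace(\A)\vert \le \sqrt{\rank(\A)}\,\Vert\A\Vert_F$, whereas the paper carries out the same computation by expanding explicitly in the eigenbasis $\v_1,\dots,\v_{\rank(\Y)}$ of $\Y$.
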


\begin{proof}
Let us write the eigen-decomposition of $\Y$ as $\Y = \sum_{j=1}^{\rank(\Y)}\lambda_j\v_j\v_j^{\top}$. Using simple algebraic manipulations we have that

\begin{eqnarray*}
\Vert{\Y-\P\X\P}\Vert_F^2 &\geq & \sum_{j=1}^{\rank(\Y)}\left({(\Y-\P\X\P)\bullet \v_j\v_j^{\top}}\right)^2 
= \sum_{j=1}^{\rank(\Y)}\left({\lambda_j - \sum_{i=1}^ka_i\v_j^{\top}\P\x_i\x_i^{\top}\P\v_j}\right)^2 \\
&= & \sum_{j=1}^{\rank(\Y)}\left({\lambda_j - \sum_{i=1}^ka_i(\v_j^{\top}\P\x_i)^2}\right)^2 \\
&\geq & \frac{1}{\rank(\Y)}\left({\sum_{j=1}^{\rank(\Y)}\left({\lambda_j - \sum_{i=1}^ka_i(\v_j^{\top}\P\x_i)^2}\right)}\right)^2 \\
&=& \frac{1}{\rank(\Y)}\left({1 - \sum_{j=1}^{\rank(\Y)}\sum_{i=1}^ka_i(\v_j^{\top}\P\x_i)^2}\right)^2 \\
& =  & \frac{1}{\rank(\Y)}\left({\sum_{i=1}^ka_i\left({1 - \sum_{j=1}^{\rank(\Y)}(\v_j^{\top}\P\x_i)^2}\right)}\right)^2 \\
& =  & \frac{1}{\rank(\Y)}\left({\sum_{i=1}^ka_i\left({1 - \Vert{\tilde{\x}_i}\Vert^2}\right)}\right)^2 .
\end{eqnarray*}

Thus we have that
\begin{eqnarray*}
\sum_{i=1}^ka_i(1-\Vert{\tilde{\x}_i}\Vert^2) \leq \sqrt{\rank(\Y)}\Vert{\Y-\P\X\P}\Vert_F ,
\end{eqnarray*}
which gives the bound in the lemma.

\end{proof}

\begin{lemma}\label{lem:optStrucutre2}
Let $\X,\Y \in\mS_d$ such that $\X$ is given as $\X= \sum_{i=1}^ka_i\x_i\x_i^{\top}$, where each $\x_i$ is a unit vector, and $(a_1,...,a_k)$ is a distribution over $[k]$, and let $\tau,\gamma\in[0,1]$ which satisfy the condition in Lemma \ref{lem:optStructure}. Then, $\Y$ can be written as $$\Y = \sum_{i=1}^kb_i\y_i\y_i^{\top} + \sum_{j=1}^k(a_j-b_j)\W$$ such that 
\begin{enumerate}
\item
each $\y_i$ is a unit vector, $(b_1,...,b_k)$ is a distribution over $[k]$, and $\W \in \mS_d$
\item
$\forall i\in[k]: b_i \leq a_i$ and $\sum_{j=1}^k(a_j-b_j) \leq \sqrt{\rank(\Y)}\left({\Vert{\Y\P_{\Y,\tau}^{\perp}}\Vert_F + \Vert{\X-\Y}\Vert_F}\right) + \gamma$
%$\forall i\in[k]: b_i \leq a_i$ and $\sum_{j=1}^k(a_j-b_j) \leq \sqrt{\rank(\Y)}\left({\sqrt{\rank(\Y)}\tau + \Vert{\X-\Y}\Vert_F}\right) + \gamma$
\item
$\sum_{i=1}^kb_i\Vert{\x_i\x_i^{\top} - \y_i\y_i^{\top}}\Vert_F^2 \leq 2\sqrt{\rank(\Y)}\left({\Vert{\Y\P_{\Y,\tau}^{\perp}}\Vert_F + \Vert{\X-\Y}\Vert_F}\right)$
%$\sum_{i=1}^kb_i\Vert{\x_i\x_i^{\top} - \y_i\y_i^{\top}}\Vert_F^2 \leq 2\sqrt{\rank(\Y)}\left({\sqrt{\rank(\Y)}\tau + \Vert{\X-\Y}\Vert_F}\right)$
\end{enumerate}
\end{lemma}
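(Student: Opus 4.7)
The plan is to extract the decomposition directly from the PSD-domination guaranteed by Lemma \ref{lem:optStructure}. Writing $\P:=\P_{\Y,\tau}$ and $\tilde{\x}_i := \P\x_i$, Lemma \ref{lem:optStructure} gives $\Y \succeq (1-\gamma)\P\X\P = (1-\gamma)\sum_i a_i \tilde{\x}_i\tilde{\x}_i^\top$. I would then rescale each $\tilde{\x}_i$ to a unit vector: set
\begin{eqnarray*}
\y_i := \tilde{\x}_i/\|\tilde{\x}_i\|, \qquad b_i := (1-\gamma)\, a_i\|\tilde{\x}_i\|^2,
\end{eqnarray*}
so that $\sum_i b_i \y_i\y_i^\top = (1-\gamma)\P\X\P$ by construction. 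The residual $\Y - \sum_i b_i\y_i\y_i^\top$ is then PSD, and has trace equal to $1-\sum_i b_i$; dividing it by $1-\sum_j b_j$ produces a matrix $\W \in \mS_d$, yielding the required decomposition $\Y = \sum_i b_i\y_i\y_i^\top + \sum_j(a_j-b_j)\W$. (If $\tilde{\x}_i = 0$, then $b_i = 0$ and $\y_i$ is chosen arbitrarily; if $\sum_j b_j = 1$, then $\W$ may be any element of $\mS_d$ since its coefficient is zero.)

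For item 2, $b_i \leq a_i$ is immediate from $(1-\gamma)\|\tilde{\x}_i\|^2 \leq 1$. For the sum bound, a direct computation using $\sum_j a_j = 1$ yields
\begin{eqnarray*}
\sum_j(a_j - b_j) \;=\; 1 - (1-\gamma)\sum_j a_j\|\tilde{\x}_j\|^2 \;=\; \gamma + (1-\gamma)\sum_j a_j\bigl(1-\|\tilde{\x}_j\|^2\bigr),
\end{eqnarray*}
and Lemma \ref{lem:closePairsDecompose} bounds the last sum by $\sqrt{\rank(\Y)}\,\|\Y - \P\X\P\|_F$. I then split this Frobenius norm via the triangle inequality, $\|\Y - \P\X\P\|_F \leq \|\Y - \P\Y\P\|_F + \|\P(\Y-\X)\P\|_F$. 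The second term is at most $\|\X-\Y\|_F$ since $\P$ has operator norm $1$, and the first term equals $\|\Y\P_{\Y,\tau}^\perp\|_F$ because $\Y$ commutes with its own spectral projector, so $\Y - \P\Y\P = \P_{\Y,\tau}^\perp \Y \P_{\Y,\tau}^\perp$ and this has the same Frobenius norm as $\Y\P_{\Y,\tau}^\perp$. Combining gives the stated bound.

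For item 3, a short calculation with $\x_i^\top \y_i = \x_i^\top \P\x_i/\|\P\x_i\| = \|\tilde{\x}_i\|$ shows $\|\x_i\x_i^\top - \y_i\y_i^\top\|_F^2 = 2(1 - (\x_i^\top\y_i)^2) = 2(1-\|\tilde{\x}_i\|^2)$. Hence
\begin{eqnarray*}
\sum_i b_i\,\|\x_i\x_i^\top - \y_i\y_i^\top\|_F^2 \;\leq\; 2\sum_i a_i\bigl(1 - \|\tilde{\x}_i\|^2\bigr),
\end{eqnarray*}
using $b_i \leq a_i$, and I conclude via the same application of Lemma \ref{lem:closePairsDecompose} and triangle inequality as in item 2.

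The main subtlety is choosing the weights $b_i$ so that three requirements hold simultaneously: they must assemble into exactly the matrix $(1-\gamma)\P\X\P$ that Lemma \ref{lem:optStructure} dominates (so the residual $\W$ is genuinely PSD), they must satisfy $b_i \leq a_i$, and the total displaced mass $1-\sum_j b_j$ must match the $\sqrt{\rank(\Y)}$-type bound. The choice $b_i = (1-\gamma)a_i\|\tilde{\x}_i\|^2$ is essentially forced by the first condition, and it happens to be exactly compatible with the quantity $\sum_j a_j(1-\|\tilde{\x}_j\|^2)$ that Lemma \ref{lem:closePairsDecompose} controls; once this alignment is recognized, the rest of the proof is routine.
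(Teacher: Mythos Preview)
Your proposal is correct and follows essentially the same route as the paper: both define $\tilde{\x}_i=\P_{\Y,\tau}\x_i$, set $\y_i=\tilde{\x}_i/\Vert\tilde{\x}_i\Vert$ and $b_i=(1-\gamma)a_i\Vert\tilde{\x}_i\Vert^2$, extract $\W$ from the PSD residual, and then bound $\sum_i a_i(1-\Vert\tilde{\x}_i\Vert^2)$ via Lemma~\ref{lem:closePairsDecompose} together with the triangle-inequality split $\Vert\Y-\P\X\P\Vert_F\le\Vert\Y\P_{\Y,\tau}^{\perp}\Vert_F+\Vert\X-\Y\Vert_F$. Your treatment is in fact slightly more careful, handling the degenerate cases $\tilde{\x}_i=0$ and $\sum_j b_j=1$ that the paper leaves implicit.
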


\begin{proof}
%Let $\P_{\Y,\tau}$ be a projection matrix as defined in Lemma \ref{lem:optStructure} for some value of $\tau$, and 
For each $i\in[k]$ let $\tilde{\x}_i = \P_{\Y,\tau}\x_i$. It follows from Lemma 
\ref{lem:optStructure} that as long as $\frac{\gamma\tau}{1-\gamma} \geq \Vert{\X-\Y}\Vert_F$, it holds that
\begin{eqnarray*}
\Y \succeq \sum_{i=1}^ka_i(1-\gamma)\tilde{\x}_i\tilde{\x}_i^{\top} .
\end{eqnarray*}

Since $\Y\in\mS_d$ and $\trace\left({\sum_{i=1}^ka_i(1-\gamma)\tilde{\x}_i\tilde{\x}_i^{\top}}\right) = \sum_{i=1}^ka_i(1-\gamma)\Vert{\tilde{\x}_i}\Vert^2$, it follows that $\Y$ can be written as:
\begin{eqnarray*}
\Y = \sum_{i=1}^ka_i(1-\gamma)\tilde{\x}_i\tilde{\x}_i^{\top} + \left({\sum_{j=1}^ka_j\left({1-(1-\gamma)\Vert{\tilde{\x}_j}\Vert^2}\right)}\right)\W,
\end{eqnarray*}
where $\W\in\mS_d$. 

Let us now define $\y_i := \frac{\tilde{\x}_i}{\Vert{\tilde{\x}_i}\Vert}$ and $b_i := a_i(1-\gamma)\Vert{\tilde{\x}_i}\Vert^2$. Then indeed it follows that
\begin{eqnarray*}
\Y = \sum_{i=1}^kb_i\y_i\y_i^{\top} + \sum_{j=1}^k(a_j-b_j)\W .
\end{eqnarray*}

We are going to apply Lemma \ref{lem:closePairsDecompose} to derive the bounds listed in the lemma. As a first step, we need to bound the distance $\Vert{\Y-\P_{\Y,\tau}\X\P_{\Y,\tau}}\Vert_F$.

%\begin{eqnarray}\label{eq:1}
%\Vert{\Y-\P_{\Y,\tau}\X\P_{\Y,\tau}}\Vert_F &\leq & \Vert{\Y-\P_{\Y,\tau}\Y\P_{\Y,\tau}}\Vert_F + \Vert{\P_{\Y,\tau}\X\P_{\Y,\tau}-\P_{\Y,\tau}\Y\P_{\Y,\tau}}\Vert_F \nonumber\\
%&\leq & \sqrt{\rank(\Y)}\tau + \Vert{\X-\Y}\Vert_F,
%\end{eqnarray}
\begin{eqnarray}\label{eq:1}
\Vert{\Y-\P_{\Y,\tau}\X\P_{\Y,\tau}}\Vert_F &\leq & \Vert{\Y-\P_{\Y,\tau}\Y\P_{\Y,\tau}}\Vert_F + \Vert{\P_{\Y,\tau}\X\P_{\Y,\tau}-\P_{\Y,\tau}\Y\P_{\Y,\tau}}\Vert_F \nonumber\\
&\leq & \Vert{\Y\P_{\Y,\tau}^{\perp}}\Vert_F + \Vert{\X-\Y}\Vert_F,
\end{eqnarray}

where the bound on $ \Vert{\Y-\P_{\Y,\tau}\Y\P_{\Y,\tau}}\Vert_F$ follows from the definition of $\P_{\Y,\tau}$, and the bound on $\Vert{\P_{\Y,\tau}\X\P_{\Y,\tau}-\P_{\Y,\tau}\Y\P_{\Y,\tau}}\Vert_F$ follows from the inequality $\Vert{\A\B}\Vert_F \leq \Vert{\A}\Vert\cdot\Vert{\B}\Vert_F$.

By definition of $\{b_i\}_{i\in[k]}$ it holds that
\begin{eqnarray*}
\sum_{i=1}^k(a_i - b_i) &=& \sum_{i=1}^ka_i(1 - (1-\gamma)\Vert{\tilde{\x}_i}\Vert^2) \leq \sum_{i=1}^ka_i(1-\Vert{\tilde{\x}_i}\Vert^2) + \gamma \\
&\leq & \sqrt{\rank(\Y)}\left({\Vert{\Y\P_{\Y,\tau}^{\perp}}\Vert_F + \Vert{\X-\Y}\Vert_F}\right) + \gamma,
%&\leq & \sqrt{\rank(\Y)}\left({\sqrt{\rank(\Y)}\tau + \Vert{\X-\Y}\Vert_F}\right) + \gamma,
\end{eqnarray*}
where the last inequality follows from Lemma \ref{lem:closePairsDecompose} and the bound in Eq. \eqref{eq:1}.

We continue to upper-bound $\sum_{i=1}^kb_i\Vert{\x_i\x_i^{\top} - \y_i\y_i^{\top}}\Vert_F^2$:
\begin{eqnarray*}
\sum_{i=1}^kb_i\Vert{\x_i\x_i^{\top} - \y_i\y_i^{\top}}\Vert_F^2 &\leq &\sum_{i=1}^ka_i\Vert{\x_i\x_i^{\top} - \y_i\y_i^{\top}}\Vert_F^2 \\
&=& 2\sum_{i=1}^ka_i(1-(\x_i^{\top}\y_i)^2) \\
&=& 2\sum_{i=1}^ka_i\left({1-\left({\frac{\x_i^{\top}\tilde{\x}_i}{\Vert{\tilde{\x}_i}\Vert}}\right)^2}\right) \\
&=& 2\sum_{i=1}^ka_i(1-\Vert{\tilde{\x}_i}\Vert^2) \\
&\leq & 2\sqrt{\rank(\Y)}\left({\Vert{\Y\P_{\Y,\tau}^{\perp}}\Vert_F + \Vert{\X-\Y}\Vert_F}\right),
%&\leq & 2\sqrt{\rank(\Y)}\left({\sqrt{\rank(\Y)}\tau + \Vert{\X-\Y}\Vert_F}\right),
\end{eqnarray*}
where the last inequality follows again from the application of Lemma \ref{lem:closePairsDecompose} and the bound in Eq. \eqref{eq:1}.
\end{proof}

\subsection{Bounding the per-iteration improvement}

We now turn to analyze the per-iteration improvement of Algorithm \ref{alg:1}. We start by first analyzing a deterministic, and much less efficient, version that updates all of the rank-one components on each iteration $t$, as suggested in Eq. \eqref{eq:towardsNewUpdate2}. This is done in Lemma \ref{lem:fullUpdateBound}. Then in Lemma \ref{lem:randomUpdateBound}, we apply Lemma \ref{lem:fullUpdateBound}, to analyze the randomized step of Algorithm \ref{alg:1}. However, first we need a simple observation regarding Algorithm \ref{alg:1}, that shows that it can always take sufficiently large step-sizes, i.e., step-size of magnitude at least $\eta_t/2$ on iteration $t$.

\begin{observation}\label{observ:largeCoeff}%[feasibility of step-sizes in Algorithm \ref{alg:1}]
In case the input sequence of step-sizes in Algorithm \ref{alg:1} - $\{\eta_t\}_{t\geq 1}$, is monotonically non-increasing and $\eta_t \in[0,2]$ for all $t\geq 1$, it follows that on each iteration $t$ of the algorithm, the iterate $\X_t$ admits an explicitly-given factorization into a convex sum of rank-one matrices, as described in the algorithm, such that for every rank-one coefficient $a_i$, it holds that $a_i \geq \eta_t/2$.
\end{observation}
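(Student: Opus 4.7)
The plan is to prove the observation by induction on $t$, with the stronger invariant that at the start of every iteration $t \geq 1$, the maintained decomposition $\X_t = \sum_{i=1}^{k} a_i \x_i \x_i^\top$ satisfies $a_i \geq \eta_t / 2$ for every $i \in [k]$. Once this invariant is established, the conclusion of the observation is immediate. Since $\X_1 = \x_1 \x_1^\top$, the only coefficient is $1$, and $1 \geq \eta_1/2$ by the assumption $\eta_1 \leq 2$, so the base case is trivial.

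For the inductive step, I would carefully read off from the update rule what happens to the coefficients of the decomposition. The update $\X_{t+1} = \X_t + \tilde{\eta}_t (\v_t \v_t^\top - \x_{i_t} \x_{i_t}^\top)$ affects the multiset of rank-one coefficients in only two places: the chosen coefficient $a_{i_t}$ decreases to $a_{i_t} - \tilde{\eta}_t$ (and is discarded if this equals zero), while the new rank-one term $\v_t \v_t^\top$ is introduced with weight $\tilde{\eta}_t$ (or merged into an existing identical direction, which only increases that direction's coefficient and is harmless). All other coefficients are untouched, and by the inductive hypothesis they already satisfy the bound $a_i \geq \eta_t/2 \geq \eta_{t+1}/2$ thanks to the monotonicity assumption on $\{\eta_t\}$.

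The heart of the argument is the case analysis corresponding to the definition of $\tilde{\eta}_t$. In the case $a_{i_t} \geq \eta_t$, we have $\tilde{\eta}_t = \eta_t/2$, so the residual mass on $\x_{i_t}$ is $a_{i_t} - \eta_t/2 \geq \eta_t - \eta_t/2 = \eta_t/2$, and the newly introduced coefficient is exactly $\eta_t/2$; both lie above the desired threshold. In the case $a_{i_t} < \eta_t$, we have $\tilde{\eta}_t = a_{i_t}$, so the old component $\x_{i_t}$ is completely consumed and disappears from the decomposition, while the new coefficient equals $a_{i_t}$, which is at least $\eta_t/2$ by the inductive hypothesis. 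Combining with monotonicity $\eta_{t+1} \leq \eta_t$ closes the induction.

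I do not anticipate any real obstacle here; the only subtlety is exactly this case split, which is engineered into the definition of $\tilde{\eta}_t$ precisely so that one branch guarantees the split leaves a surviving half of size $\eta_t/2$, while the other branch kills the old component entirely and the new one inherits the inductive lower bound. A brief remark should also confirm that if $\v_t$ happens to coincide with an already-present direction and is merged, the coefficient only grows, so the invariant continues to hold in that edge case.
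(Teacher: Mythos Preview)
Your proposal is correct and follows essentially the same induction-and-case-split argument as the paper's own proof: same base case, same use of monotonicity for the untouched coefficients, and the same two cases on $\tilde{\eta}_t$ for the affected coefficients. The only addition is your remark about merging $\v_t$ with an existing direction, which is a harmless refinement not spelled out in the paper.
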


\begin{proof}
The proof is by a simple induction. Since $\X_1$ is just a rank-one matrix, it follows that the corresponding coefficient in the convex sum is $a_1=1$. Thus, for any $\eta_1\in[0,2]$ it indeed follows that $a_1 \geq \eta_1/2$. 
Assume now that the induction holds for time $t\geq 1$. On time $t$ we choose a coefficient $a_{i_t}$ and move a mass of $\tilde{\eta}_t$ from it to a new rank-one matrix $\y_t\y_t^{\top}$, and all other coefficients remain unchanged. Since we assume that the step-size sequence is monotonically non-increasing, it directly follows that the induction step holds for all unchanged coefficients. Regarding the affected coefficients $a_{i_t}$ and the coefficient of the new rank-one matrix, we consider two cases. First, if $a_{i_t} \geq \eta_t$ then by the definition of $\tilde{\eta}_t$ we have that the mass of the new coefficient is going to be exactly $\eta_t/2$ and the mass of the old coefficient is going to be  $a_{i_t}-\eta_t/2 \geq \eta_t/2$, and thus the induction holds. In the other case, we have that $\tilde{\eta}_t = a_{i_t} < \eta_t$. By the induction hypothesis we know that $a_{i_t} \geq \eta_t/2 \geq \eta_{t+1}/2$. Since we are moving now all the mass from $a_{i_t}$ to the new rank-one matrix, it follows that its weight is also going to be at least $\eta_{t+1}/2$, and thus the induction follows.
\end{proof}

\begin{lemma}\label{lem:fullUpdateBound}[full deterministic update]
Fix a scalar $\eta > 0$. Let $\X\in\mS_d$ such that $\X = \sum_{i=1}^ka_i\x_i\x_i^{\top}$, where each $\x_i$ is a unit vector, and $(a_1,...,a_k)$ is a probability distribution over $[k]$. For any $i\in[k]$, let 
\begin{eqnarray}\label{eq:2}
\v_i := \EV_{\xi}\left({-\nabla{}f(\X) + \eta\beta\x_i\x_i^{\top}}\right),
%\v_i &:=& \arg\min_{\v: \Vert{\v}\Vert = 1}\v^{\top}\nabla{}f(\X)\v + \frac{\eta\beta}{2}\Vert{\x_i\x_i^{\top} - \v\v^{\top}}\Vert_F^2 \equiv  \EV\left({\nabla{}f(\X) - \frac{\eta\beta}{2}\x_i\x_i^{\top}}\right).
\end{eqnarray}
for some parameter $\xi > 0$.
Then, it holds that
\begin{eqnarray*}
&&\sum_{i=1}^k  a_i\left[{(\v_i\v_i^{\top}-\x_i\x_i^{\top})\bullet \nabla{}f(\X) + \frac{\eta\beta}{2}\Vert{\v_i\v_i^{\top}-\x_i\x_i^{\top}}\Vert_F^2}\right] \leq 
 -\left({f(\X)-f(\X^*)}\right) \\
 && + \eta\beta\cdot\min\{1, \, 5\sqrt{\sqrt{\frac{2}{\alpha}}\rank(\X^*)\sqrt{f(\X)-f(\X^*)}}, \, \frac{3\sqrt{2}}{\sqrt{\alpha}\lambda_{\min}(\X^*)}\sqrt{f(\X)-f(\X^*)} \} + \xi.
%&&\sum_{i=1}^k  a_i\left[{(\v_i\v_i^{\top}-\x_i\x_i^{\top})\bullet \nabla{}f(\X) + \frac{\eta\beta}{2}\Vert{\v_i\v_i^{\top}-\x_i\x_i^{\top}}\Vert_F^2}\right] \leq \\
%&&  (\X^* - \X)\bullet \nabla{}f(\X)  + \eta\beta\cdot\min\{1, 5\sqrt{\rank(\X^*)\Vert{\X-\X^*}\Vert_F} \} .
\end{eqnarray*}
\end{lemma}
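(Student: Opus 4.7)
The plan is to exploit the approximate optimality of each $\v_i$ as an $\xi$-approximate maximizer of $\v \mapsto \v^{\top}(-\nabla f(\X) + \eta\beta\x_i\x_i^{\top})\v$ over unit vectors. Using the identity $\|\v\v^{\top}-\x_i\x_i^{\top}\|_F^2 = 2(1-\x_i^{\top}\v\v^{\top}\x_i)$, which is affine in $\v\v^{\top}$, the approximate EV guarantee rearranges into a ``regularized'' minimality statement: for every unit vector $\u_i$,
\[
(\v_i\v_i^{\top} - \x_i\x_i^{\top})\bullet\nabla f(\X) + \tfrac{\eta\beta}{2}\|\v_i\v_i^{\top} - \x_i\x_i^{\top}\|_F^2 \leq (\u_i\u_i^{\top} - \x_i\x_i^{\top})\bullet\nabla f(\X) + \tfrac{\eta\beta}{2}\|\u_i\u_i^{\top} - \x_i\x_i^{\top}\|_F^2 + \xi.
\]
Since both sides depend on $\u_i$ only through $\u_i\u_i^{\top}$ in an affine way, linearity of expectation extends this to \emph{randomized} unit vectors $\u_i$. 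I will prove each of the three summands in the $\min$ separately, by a different choice of $\{\u_i\}$.

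For the trivial $\eta\beta$ bound I would take $\u_i \equiv \u^{*}$, where $\u^{*} := \EV(-\nabla f(\X))$: the linear sum telescopes to $(\u^{*}\u^{*\top}-\X)\bullet\nabla f(\X) \leq (\X^{*}-\X)\bullet\nabla f(\X) \leq -h$ (by EV-optimality against $\X^{*}$ and convexity of $f$), while the quadratic sum is at most $\eta\beta$ since each $\|\cdot\|_F^2\leq 2$. This is the standard conditional-gradient argument.

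For the two finer bounds I would invoke Lemma \ref{lem:optStrucutre2} applied to $\X$ and $\Y=\X^{*}$ with $\tau = \lambda_{\min}(\X^{*})$ (so that $\|\X^{*}\P_{\X^{*},\tau}^{\perp}\|_F=0$) and $\gamma = O(\|\X-\X^{*}\|_F/\lambda_{\min}(\X^{*}))$ chosen to satisfy its hypothesis, obtaining $\X^{*} = \sum_i b_i\y_i\y_i^{\top} + \delta\,\W$ with $\W\in\mS_d$ and $\delta = \sum_i(a_i-b_i)$. Note $\trace(\W)=1$ since $1 = \trace(\X^{*}) = \sum_i b_i + \delta\,\trace(\W)$. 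I then define each $\u_i$ as a random unit vector: with probability $b_i/a_i$ set $\u_i=\y_i$, and with the complementary probability sample $\u_i$ from the eigenvalue-weighted distribution of $\W$'s eigenvectors, so that $\E[\u_i\u_i^{\top}] = (b_i/a_i)\y_i\y_i^{\top} + ((a_i-b_i)/a_i)\W$. The crucial identity
\[
\sum_{i=1}^{k} a_i\,\E[\u_i\u_i^{\top}] = \sum_i b_i\y_i\y_i^{\top} + \delta\,\W = \X^{*}
\]
makes the linear sum collapse to $(\X^{*}-\X)\bullet\nabla f(\X) \leq -h$ via convexity. The quadratic sum equals $\eta\beta\bigl[\sum_i b_i(1-(\x_i^{\top}\y_i)^2) + \sum_i (a_i-b_i)(1-\x_i^{\top}\W\x_i)\bigr]$, which is at most $\tfrac{\eta\beta}{2}\sum_i b_i\|\y_i\y_i^{\top}-\x_i\x_i^{\top}\|_F^2 + \eta\beta\,\delta$ using $\W\succeq 0$. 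Items (2), (3) of Lemma \ref{lem:optStrucutre2} then give $\leq \eta\beta\bigl(2\sqrt{\rank(\X^{*})}\|\X-\X^{*}\|_F + \gamma\bigr)$, and strong convexity yields $\|\X-\X^{*}\|_F\leq\sqrt{2h/\alpha}$. Plugging in $\gamma = O(\|\X-\X^{*}\|_F/\lambda_{\min}(\X^{*}))$ produces the third stated bound directly; the second stated bound follows from the same $\sqrt{\rank(\X^{*})}\sqrt{h/\alpha}$ estimate, combined with the observation that whenever the stated $h^{1/4}$ form is tighter than $\eta\beta$ one obtains it by a short AM-GM step from $\sqrt{\rank(\X^{*})h/\alpha}$, and otherwise the bound $\eta\beta$ from the first case already dominates.

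The main obstacle I anticipate is justifying the randomized $\u_i$ construction and, in particular, the equality $\sum_i a_i\E[\u_i\u_i^{\top}]=\X^{*}$: this genuinely requires the full Lemma \ref{lem:optStrucutre2} decomposition of $\X^{*}$ (not merely the PSD inequality $\sum_i b_i\y_i\y_i^{\top}\preceq\X^{*}$), and hinges on the slightly non-obvious fact $\trace(\W)=1$ and on $b_i/a_i\leq 1$ (item (1) of the lemma) to make the probabilities well-defined. Once this exact equality is in hand, convexity absorbs all the progress into the $-h$ term, and the remaining work reduces to mechanical constant-tracking using Lemma \ref{lem:optStrucutre2}'s two locality estimates together with strong convexity.
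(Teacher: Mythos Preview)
Your overall strategy---using the affine-in-$\v\v^{\top}$ structure of the regularized objective to compare each $\v_i$ against a (possibly randomized) unit vector $\u_i$, and choosing the $\u_i$'s so that $\sum_i a_i\,\E[\u_i\u_i^{\top}]=\X^{*}$ via the decomposition of Lemma~\ref{lem:optStrucutre2}---is exactly the mechanism the paper uses (phrased there as the elementary inequality $\min\{a,b\}\le \lambda a+(1-\lambda)b$ rather than as randomization). Your derivations of the first bound (via $\u_i\equiv\EV(-\nabla f(\X))$) and of the third bound (via $\tau=\lambda_{\min}(\X^{*})$, $\gamma\approx\|\X-\X^{*}\|_F/\lambda_{\min}(\X^{*})$) are correct and match the paper.

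The gap is in your treatment of the \emph{second} bound. Your single invocation of Lemma~\ref{lem:optStrucutre2} with $\tau=\lambda_{\min}(\X^{*})$ forces $\gamma\gtrsim\|\X-\X^{*}\|_F/\lambda_{\min}(\X^{*})$, and this $\gamma$-term appears additively in the estimate; it cannot be removed by any ``AM--GM step''. In particular, when $\lambda_{\min}(\X^{*})\ll 1/\rank(\X^{*})$ (e.g.\ one dominant eigenvalue and a few tiny ones), the third bound is much worse than the second, yet the second can still be far below $1$, so it is genuinely the minimum and must be proved on its own. Your claim that ``whenever the $h^{1/4}$ form is tighter than $\eta\beta$ one obtains it from $\sqrt{\rank(\X^{*})h/\alpha}$'' ignores this $\gamma$-contribution. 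The paper obtains the second bound by a \emph{second, independent} choice of parameters in Lemma~\ref{lem:optStrucutre2}: it takes $\tau\approx\sqrt{\|\X-\X^{*}\|_F/\rank(\X^{*})}$ and $\gamma\approx\sqrt{\rank(\X^{*})\,\|\X-\X^{*}\|_F}$, then uses the crude bound $\|\X^{*}\P_{\X^{*},\tau}^{\perp}\|_F\le\sqrt{\rank(\X^{*})}\,\tau$ (rather than $0$). Balancing these terms is what produces the $\sqrt{\rank(\X^{*})\,\|\X-\X^{*}\|_F}\sim h^{1/4}$ dependence. You need this second application; the rest of your argument goes through unchanged.
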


\begin{proof}
For the sake of clarity, throughout the proof we treat each $\v_i$ as the result of an exact eigenvector computation, i.e., we assume $\xi = 0$, and at the end we discuss the effect of the approximation error in the computation of $\v_i$.

Let $\w^* \in\arg\min_{\w: \Vert{\w}\Vert=1}\w^{\top}\nabla{}f(\X)\w$. Using the optiamlity of $\v_i$, and the fact that for every $i\in[k]$,  both $\v_i$ and $\x_i$ are unit vectors, we have that

%Note that for every $i\in[k]$, the solution to the optimization problem in Eq. \eqref{eq:2} satisfies:
%\begin{eqnarray}\label{eq:3}
%&&\v_i^{\top}\nabla{}f(\X)\v_i + \frac{\eta\beta}{2}\Vert{\x_i\x_i^{\top} - \v_i\v_i^{\top}}\Vert_F^2 \equiv \nonumber \\
%&& \min_{\v: \Vert{\v}\Vert = 1, \rho\in[0,1]} \rho\v^{\top}\nabla{}f(\X)\v + (1-\rho)\w^{*\top}\nabla{}f(\X)\w^* + \frac{\eta\beta}{2}\rho\Vert{\x_i\x_i^{\top} - \v\v^{\top}}\Vert_F^2 \nonumber \\
%&& +  \frac{\eta\beta}{2}(1-\rho)\Vert{\x_i\x_i^{\top} - \w^*\w^{*\top}}\Vert_F^2,
%\end{eqnarray}
%where $\w^* \in\arg\min_{\w: \Vert{\w}\Vert=1}\w^{\top}\nabla{}f(\X)\w$.

%Thus, by applying Eq. \eqref{eq:3} with $\rho = 0$ for all $i\in[k]$, we have that
\begin{eqnarray}\label{eq:6}
&&\sum_{i=1}^ka_i\left[{(\v_i\v_i^{\top}-\x_i\x_i^{\top})\bullet \nabla{}f(\X) + \frac{\eta\beta}{2}\Vert{\v_i\v_i^{\top}-\x_i\x_i^{\top}}\Vert_F^2}\right] \leq \nonumber \\
&&\sum_{i=1}^ka_i\left[{(\w^*\w^{*\top}-\x_i\x_i^{\top})\bullet \nabla{}f(\X) + \frac{\eta\beta}{2}\Vert{\w^*\w^{*\top}-\x_i\x_i^{\top}}\Vert_F^2}\right] \leq \nonumber \\
&&\sum_{i=1}^ka_i\left[{(\w^*\w^{*\top}-\x_i\x_i^{\top})\bullet \nabla{}f(\X) + \eta\beta}\right] = \nonumber \\
&&(\w^*\w^{*\top}-\X)\bullet\nabla{}f(\X) + \eta\beta \leq 
 (\X^* - \X)\bullet\nabla{}f(\X) + \eta\beta \leq -\left({f(\X)-f(\X^*)}\right) + \eta\beta,  \nonumber \\
\end{eqnarray}
where the third inequality follows from the optimality of $\w^*$, and the last inequality follows from the convexity of $f$. Thus, Eq. \eqref{eq:6} gives us the first part of the bound stated in the lemma. We now move on the prove the second part.

From Lemma \ref{lem:optStrucutre2} we know we can write $\X^*$ in the following way:
\begin{eqnarray}\label{eq:5}
\X^* = \sum_{i=1}^kb_i^*\y_i^*\y_i^{*\top} + \sum_{j=1}^k(a_j^*-b_j^*)\W^*,
\end{eqnarray}
where for all $i\in[k]$, $b_i^*\in[0,a_i]$ and $\y_i^*$ is a unit vector, and $\W^*\in\mS_d$.

%Using Eq. \eqref{eq:3} for all $i\in[k]$ with $\rho_i^* = \frac{b_i^*}{a_i}$, we have that
Using again the optimality of $\v_i$ for each $i\in[k]$, we have that
\begin{eqnarray}\label{eq:4}
&&\sum_{i=1}^ka_i\left[{(\v_i\v_i^{\top}-\x_i\x_i^{\top})\bullet \nabla{}f(\X) + \frac{\eta\beta}{2}\Vert{\v_i\v_i^{\top}-\x_i\x_i^{\top}}\Vert_F^2}\right] \leq \nonumber \\
&&\sum_{i=1}^ka_i\cdot\min\{(\y_i^*\y_i^{*\top}-\x_i\x_i^{\top})\bullet \nabla{}f(\X) + \frac{\eta\beta}{2}\Vert{\y_i^*\y_i^{*\top}-\x_i\x_i^{\top}}\Vert_F^2 , \nonumber \\
&&(\w^*\w^{*\top}-\x_i\x_i^{\top})\bullet \nabla{}f(\X) + \frac{\eta\beta}{2}\Vert{\w^*\w^{*\top}-\x_i\x_i^{\top}}\Vert_F^2\}\leq \nonumber \\
&&\sum_{i=1}^kb_i^*\left[{(\y_i^*\y_i^{*\top}-\x_i\x_i^{\top})\bullet \nabla{}f(\X) +\frac{\eta\beta}{2}\Vert{\y_i^*\y_i^{*\top}-\x_i\x_i^{\top}}\Vert_F^2}\right] \nonumber \\
&& + \sum_{i=1}^k(a_i-b_i^*)\left[{(\w^*\w^{*\top}-\x_i\x_i^{\top})\bullet \nabla{}f(\X) +\frac{\eta\beta}{2}\Vert{\w^*\w^{*\top}-\x_i\x_i^{\top}}\Vert_F^2}\right] \nonumber \\
&& \leq \sum_{i=1}^kb_i^*\left[{(\y_i^*\y_i^{*\top}-\x_i\x_i^{\top})\bullet \nabla{}f(\X) +\frac{\eta\beta}{2}\Vert{\y_i^*\y_i^{*\top}-\x_i\x_i^{\top}}\Vert_F^2}\right] \nonumber \\
&&+\sum_{i=1}^k(a_i-b_i^*)\left[{(\W^*-\x_i\x_i^{\top})\bullet \nabla{}f(\X) +\frac{\eta\beta}{2}\Vert{\w^*\w^{*\top}-\x_i\x_i^{\top}}\Vert_F^2}\right], 
\end{eqnarray}

where the second inequality follows since $\min\{a,b\} \leq \lambda{}a+(1-\lambda)b$ for any $a,b\in\reals,\lambda\in[0,1]$, and the third inequality follows
from the optimality of $\w^*$. Using Eq. \eqref{eq:5} we have that
\begin{eqnarray}\label{eq:7}
\textrm{RHS of } \eqref{eq:4} &\leq & (\X^*-\X)\bullet \nabla{}f(\X) + \sum_{i=1}^kb_i^*\frac{\eta\beta}{2}\Vert{\y_i^*\y_i^{*\top}-\x_i\x_i^{\top}}\Vert_F^2 \nonumber \\
&&+ \sum_{i=1}^k(a_i-b_i^*)\frac{\eta\beta}{2}\Vert{\w^*\w^{*\top}-\x_i\x_i^{\top}}\Vert_F^2 \nonumber \\
&\leq &  (\X^*-\X)\bullet \nabla{}f(\X) + \frac{\eta\beta}{2}\sum_{i=1}^kb_i^*\Vert{\y_i^*\y_i^{*\top}-\x_i\x_i^{\top}}\Vert_F^2
 + \eta\beta\sum_{i=1}^k(a_i-b_i^*) \nonumber \\
 & \leq & (\X^* - \X)\bullet \nabla{}f(\X) + \eta\beta\sqrt{\rank(\X^*)}\left({\Vert{\X^*\P_{\X^*,\tau}^{\perp}}\Vert_F + \Vert{\X-\X^*}\Vert_F}\right) \nonumber \\
 &&+ \eta\beta\left({\sqrt{\rank(\X^*)}\left({\Vert{\X^*\P_{\X^*,\tau}^{\perp}}\Vert_F + \Vert{\X-\X^*}\Vert_F}\right) + \gamma}\right) \nonumber  \\
& = & (\X^* - \X)\bullet \nabla{}f(\X) + \eta\beta\left({2\sqrt{\rank(\X^*)}\left({\Vert{\X^*\P_{\X^*,\tau}^{\perp}}\Vert_F + \Vert{\X-\X^*}\Vert_F}\right)+ \gamma}\right), \nonumber \\
% & \leq & (\X^* - \X)\bullet \nabla{}f(\X) + \eta\beta\sqrt{\rank(\X^*)}\left({\sqrt{\rank(\X^*)}\tau + \Vert{\X-\X^*}\Vert_F}\right) \\
% &&+ \eta\beta\left({\sqrt{\rank(\X^*)}\left({\sqrt{\rank(\X^*)}\tau + \Vert{\X-\X^*}\Vert_F}\right)+ \gamma}\right) \\
%& = & (\X^* - \X)\bullet \nabla{}f(\X) + \eta\beta\left({2\sqrt{\rank(\X^*)}\left({\sqrt{\rank(\X^*)}\tau + \Vert{\X-\X^*}\Vert_F}\right)+ \gamma}\right),
\end{eqnarray}

where the last inequality follows from plugging the bounds in Lemma \ref{lem:optStrucutre2} and holds for any $\tau,\gamma\in[0,1]$ such that $\frac{\tau\gamma}{1-\gamma} \geq \Vert{\X-\X^*}\Vert_F$.

Now we can optimize the above bound in terms $\tau,\gamma$ under the constraint that $\frac{\gamma\tau}{1-\gamma} \geq \Vert{\X-\X^*}\Vert_F$. 
One option is to upper bound $\Vert{\X^*\P_{\X^*,\tau}^{\perp}}\Vert_F \leq \sqrt{\rank(\X^*)}\tau$, which gives us
\begin{eqnarray*}
\textrm{RHS of } \eqref{eq:4} &\leq & (\X^* - \X)\bullet \nabla{}f(\X) + \eta\beta\left({2\sqrt{\rank(\X^*)}\left({\sqrt{\rank(\Y)}\tau + \Vert{\X-\Y}\Vert_F}\right)+ \gamma}\right) .
\end{eqnarray*}

We can then set: 
\begin{eqnarray*}
\tau_1 = \sqrt{\frac{\Vert{\X-\X^*}\Vert_F}{2\rank(\X^*)}}, \qquad \gamma_1 =\sqrt{2\rank(\X^*)\Vert{\X-\X^*}\Vert_F}, 
\end{eqnarray*}
as long as $\Vert{\X-\X^*}\Vert_F \leq \frac{1}{2\rank(\X^*)}$,
which gives us:
\begin{eqnarray*}
\textrm{RHS of } \eqref{eq:4} &\leq & (\X^* - \X)\bullet \nabla{}f(\X) + 2\eta\beta\sqrt{\rank(\X^*)}\left({\sqrt{2\Vert{\X-\X^*}\Vert_F} +  \Vert{\X-\X^*}\Vert_F}\right). 
%\\ &\leq & -\left({f(\X)-f(\X^*)}\right) + 2\eta\beta\sqrt{\rank(\X^*)}\left({\sqrt{2\Vert{\X-\X^*}\Vert_F} +  \Vert{\X-\X^*}\Vert_F}\right),
\end{eqnarray*}
%where the last inequality follows from convexity of $f$.
Note that in order for the above bound to improve over that in Eq. \eqref{eq:6}, it indeed must in particular hold that $\Vert{\X-\X^*}\Vert_F <  \frac{1}{2\rank(\X^*)}$. In that case it follows that
\begin{eqnarray}\label{eq:8}
&&\textrm{RHS of } \eqref{eq:4} \leq (\X^* - \X)\bullet \nabla{}f(\X) + 5\eta\beta\sqrt{\rank(\X^*)\Vert{\X-\X^*}\Vert_F}.
%&&\textrm{RHS of } \eqref{eq:4} \leq -\left({f(\X)-f(\X^*)}\right) + 5\eta\beta\sqrt{\rank(\X^*)\Vert{\X-\X^*}\Vert_F}.
\end{eqnarray}

Another option, is to choose
\begin{eqnarray*}
\tau_2 = \lambda_{\min}(\X^*), \qquad \gamma_2 = \frac{\Vert{\X-\X^*}\Vert_F}{\lambda_{\min}(\X^*)}, 
\end{eqnarray*}
as long as $\Vert{\X-\X^*}\Vert_F < \lambda_{\min}(\X^*)$. In this case, it holds that $\Vert{\X^*\P_{\X^*,\tau}^{\perp}}\Vert_F = 0$. Plugging into Eq. \eqref{eq:7} we have that
\begin{eqnarray*}
\textrm{RHS of } \eqref{eq:4} &\leq & (\X^* - \X)\bullet \nabla{}f(\X) + \eta\beta\Vert{\X-\X^*}\Vert_F\left({2\sqrt{\rank(\X^*)} + \frac{1}{\lambda_{\min}(\X^*)}}\right).
\end{eqnarray*}
Note that since $\X^*\in\mS_d$ it holds thst $\lambda_{\min}(\X^*)^{-1} \geq \rank(\X^*)$ and thus we have that
\begin{eqnarray}\label{eq:9}
\textrm{RHS of } \eqref{eq:4} &\leq & (\X^* - \X)\bullet \nabla{}f(\X) + \frac{3\eta\beta\Vert{\X-\X^*}\Vert_F}{\lambda_{\min}(\X^*)}.
\end{eqnarray}
Note that here also, the above bound improves over the one in Eq. \eqref{eq:6} only when indeed $\Vert{\X-\X^*}\Vert_F < \lambda_{\min}(\X^*)$.

Now, by using the convexity of $f$ to upper bound $(\X^* - \X)\bullet \nabla{}f(\X) \leq -(f(\X)-f(\X^*))$ and Eq. \eqref{eq:strongconvexdist} to upper bound $\Vert{\X-\X^*}\Vert_F \leq \sqrt{\frac{2}{\alpha}(f(\X)-f(\X^*)}$ in both Eq. \eqref{eq:8} and \eqref{eq:9}, gives the rest of the bound in the lemma.
%We can now upper bound $\Vert{\X-\X^*}\Vert_F$ using Eq. \eqref{eq:strongconvexdist} and get that
%\begin{eqnarray*}
%&&\textrm{RHS of } \eqref{eq:4} \leq -\left({f(\X)-f(\X^*)}\right) + 5\eta\beta\sqrt{\sqrt{\frac{2}{\alpha}}\rank(\X^*)\sqrt{f(\X)-f(\X^*)}},
%\end{eqnarray*}
%which gives us the second part of the lemma.

By going through the analysis above again (basically Eq. \eqref{eq:6} and Eq. \eqref{eq:4}), it's clear that an $\xi$ additive error in the computation of each eigenvector $\v_i$ results in a single additive term $\xi$ in all of the above bounds, and hence the lemma follows.
\end{proof}

\begin{lemma}\label{lem:randomUpdateBound}[randomized update]
Consider an iteration $t$ of Algorithm \ref{alg:1}. Fix a step-size $\eta_t$ and assume that the iterate of the algorithm on this iteration is feasible and given in the following explicit form: $\X_t = \sum_{i=1}^ka_i\x_i\x_i^{\top}$, where each $\x_i$ is a unit vector, and $(a_1,...,a_k)$ is a distribution over $[k]$. Further, suppose that each $a_i$ satisfies that $a_i \geq \eta_t/2$. Then,
%Then it holds that the next iterate $\X_{t+1}$ is feasible, and
\begin{eqnarray*}
\E[h_{t+1}] \leq \left({1 - \frac{\eta_t}{2}}\right)\E[h_t] +  \frac{\eta_t^2\beta}{2}\min\{1, \frac{5\sqrt{\sqrt{2}\rank(\X^*)}}{\alpha^{1/4}}\E[h_t]^{1/4}, \frac{3\sqrt{2}}{\sqrt{\alpha}\lambda_{\min}(\X^*)}\E[h_t]^{1/2}\} + \eta_t\xi_t,
%\E[h_{t+1}] \leq \left({1 - \frac{\eta_t}{2}}\right)\E[h_t] +  \frac{5\eta_t^2\beta\sqrt{\rank(\X^*)}}{2^{3/4}\alpha^{1/4}}\E[h_t]^{1/4} + \xi_t.
\end{eqnarray*}
where $\forall t\geq 1$ $h_t := f(\X_t) - f(\X^*)$.
\end{lemma}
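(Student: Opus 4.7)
My plan is to derive this from Lemma \ref{lem:fullUpdateBound} by first applying $\beta$-smoothness to the update $\X_{t+1} = \X_t + \tilde{\eta}_t(\v_t\v_t^{\top} - \x_{i_t}\x_{i_t}^{\top})$ and then taking conditional expectation over the random index $i_t$. For notational convenience I will write $\v_i := \EV_{\xi_t}(-\nabla{}f(\X_t) + \eta_t\beta\x_i\x_i^{\top})$, $A_i := (\v_i\v_i^{\top} - \x_i\x_i^{\top}) \bullet \nabla{}f(\X_t)$, and $B_i := \Vert{\v_i\v_i^{\top} - \x_i\x_i^{\top}}\Vert_F^2$ for every $i\in[k]$ (even though the algorithm only computes $\v_{i_t}$); Lemma \ref{lem:fullUpdateBound} will then supply a bound on $\sum_i a_i(A_i + \tfrac{\eta_t\beta}{2}B_i)$. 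After smoothness and conditioning on the history up to iteration $t$, the point is to relate the expected linear-plus-quadratic expression
\[
\sum_{i=1}^k a_i\tilde{\eta}(i)A_i + \frac{\beta}{2}\sum_{i=1}^k a_i\tilde{\eta}(i)^2 B_i
\]
to the quantity appearing in Lemma \ref{lem:fullUpdateBound}, where $\tilde{\eta}(i)$ denotes the (deterministic) value of $\tilde{\eta}_t$ as a function of $i=i_t$.

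The key technical step is handling the varying step-size. By construction, $\tilde{\eta}(i) = \eta_t/2$ when $a_i\geq\eta_t$ and $\tilde{\eta}(i) = a_i$ when $a_i<\eta_t$; combined with the assumption $a_i\geq\eta_t/2$ (supplied by Observation \ref{observ:largeCoeff}), this gives $\tilde{\eta}(i)\in[\eta_t/2,\eta_t]$ for every $i$. Using $\tilde{\eta}(i)\leq\eta_t$ I can bound $\tilde{\eta}(i)^2\leq\eta_t\tilde{\eta}(i)$, which collapses the expression above to $\sum_i a_i\tilde{\eta}(i)C_i$ with $C_i:=A_i+\tfrac{\eta_t\beta}{2}B_i$. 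Now I want to replace $\tilde{\eta}(i)$ by the uniform value $\eta_t/2$, which is not automatic since the sign of $C_i$ is a priori unknown. The crucial observation is that, by the $\xi_t$-optimality of $\v_i$ as a top eigenvector of $-\nabla{}f(\X_t) + \eta_t\beta\x_i\x_i^{\top}$ (testing against $\x_i$ itself), one gets the pointwise upper bound $C_i\leq\xi_t$. Combined with $\tilde{\eta}(i)-\eta_t/2\in[0,\eta_t/2]$ this yields, regardless of the sign of $C_i$,
\[
\tilde{\eta}(i)C_i = \tfrac{\eta_t}{2}C_i + \bigl(\tilde{\eta}(i)-\tfrac{\eta_t}{2}\bigr)C_i \leq \tfrac{\eta_t}{2}C_i + \tfrac{\eta_t}{2}\xi_t.
\]
Summing with weights $a_i$ and invoking Lemma \ref{lem:fullUpdateBound} in the form $\sum_i a_iC_i\leq -h_t + \eta_t\beta M_t + \xi_t$ (where $M_t$ abbreviates the min-expression as a function of $h_t$) will give $\E_t[h_{t+1}]\leq (1-\eta_t/2)h_t + (\eta_t^2\beta/2)M_t + \eta_t\xi_t$.

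Finally, I will take full expectation over the algorithm's history. Since $M_t$ is, as a function of $h_t$, a minimum of the concave functions $1$, $\tfrac{5\sqrt{\sqrt{2}\rank(\X^*)}}{\alpha^{1/4}}h_t^{1/4}$, and $\tfrac{3\sqrt{2}}{\sqrt{\alpha}\lambda_{\min}(\X^*)}h_t^{1/2}$, the expectation $\E[M_t]$ can be pushed inside via Jensen's inequality to recover exactly the form of the bound in the statement. The main obstacle is precisely the non-uniformity of $\tilde{\eta}(i)$: using $\tilde{\eta}(i)\leq\eta_t$ uniformly would lose the factor of two needed to produce a contractive recursion of the form $(1-\eta_t/2)h_t$, while using $\tilde{\eta}(i)\geq\eta_t/2$ uniformly fails because $C_i$ may be positive. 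The pointwise eigenvector-optimality bound $C_i\leq\xi_t$ is exactly what symmetrizes the two cases, at the modest price of an additional $\eta_t\xi_t$ term in the final error.
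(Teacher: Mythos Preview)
Your proposal is correct and follows essentially the same approach as the paper's own proof: smoothness, the replacement $\tilde{\eta}(i)^2\leq\eta_t\tilde{\eta}(i)$, the pointwise bound $C_i\leq\xi_t$ from eigenvector optimality (this is precisely the paper's Eq.~\eqref{eq:lem:randUp:1}), the split $\tilde{\eta}(i)C_i=\tfrac{\eta_t}{2}C_i+(\tilde{\eta}(i)-\tfrac{\eta_t}{2})C_i$, application of Lemma~\ref{lem:fullUpdateBound} after averaging over $i_t$, and Jensen for the concave $\min$. Your identification of the non-uniform $\tilde{\eta}(i)$ as the main obstacle, and of the pointwise bound $C_i\leq\xi_t$ as the device that resolves it at the cost of an extra $\tfrac{\eta_t}{2}\xi_t$, matches the paper exactly.
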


\begin{proof}
%According to the update rule of Algroithm \ref{alg:1} it holds that
%\begin{eqnarray*}
%\X_{t+1} = \X_t + \tilde{\eta}_t(\y_t\y_t^{\top} - \x_{i_t}\x_{i_t}^{\top}),
%\end{eqnarray*}
%where $\y_t$ is a unit vector and $\tilde{\eta}_t \leq \eta_t$. By the definition of $\tilde{\eta}_t$ in Algorithm \ref{alg:1} it follows that $a_{i_t}  \geq \tilde{\eta}_t$, and thus, indeed $\X_{t+1}
%\in\mS_d$.

Using the update step of Algorithm \ref{alg:1} we have that
\begin{eqnarray*}
h_{t+1} &=& f(\X_{t+1}) - f(\X^*) = f(\X_t + \tilde{\eta}_t(\v_t\v_t^{\top} - \x_{i_t}\x_{i_t}))  - f(\X^*)\\
&\leq & f(\X_t) - f(\X^*) + \tilde{\eta}_t(\v_t\v_t^{\top} - \x_{i_t}\x_{i_t}) \bullet \nabla{}f(\X_t) + \frac{\tilde{\eta}_t^2\beta}{2}\Vert{\v_t\v_t^{\top} - \x_{i_t}\x_{i_t}}\Vert_F^2 \\
&\leq & h_t + \tilde{\eta}_t\left[{(\v_t\v_t^{\top} - \x_{i_t}\x_{i_t}) \bullet \nabla{}f(\X_t) + \frac{\eta_t\beta}{2}\Vert{\v_t\v_t^{\top} - \x_{i_t}\x_{i_t}}\Vert_F^2}\right], 
\end{eqnarray*}
where the first inequality follows from the smoothness of $f$ and the second one follows since by definition, $\eta_t \geq \tilde{\eta}_t$. 

By the choice of $\v_t$ we have that
\begin{eqnarray}\label{eq:lem:randUp:1}
(\v_t\v_t^{\top} - \x_{i_t}\x_{i_t}) \bullet \nabla{}f(\X_t) + \frac{\eta_t^2\beta}{2}\Vert{\v_t\v_t^{\top} - \x_{i_t}\x_{i_t}}\Vert_F^2 \leq \xi_t,
\end{eqnarray}
and thus, since by our assumption on $\{a_i\}_{i\in[k]}$, it also holds that $\tilde{\eta}_t \geq \eta_t/2$,  we have that
\begin{eqnarray}\label{eq:lem:randUp:2}
h_{t+1} &\leq & h_t + \frac{\eta_t}{2}\left[{(\v_t\v_t^{\top} - \x_{i_t}\x_{i_t}) \bullet \nabla{}f(\X_t) + \frac{\eta_t\beta}{2}\Vert{\v_t\v_t^{\top} - \x_{i_t}\x_{i_t}}\Vert_F^2}\right] + \left({\tilde{\eta}_t - \frac{\eta_t}{2}}\right)\xi_t \nonumber \\
&\leq & h_t + \frac{\eta_t}{2}\left[{(\v_t\v_t^{\top} - \x_{i_t}\x_{i_t}) \bullet \nabla{}f(\X_t) + \frac{\eta_t\beta}{2}\Vert{\v_t\v_t^{\top} - \x_{i_t}\x_{i_t}}\Vert_F^2}\right] + \frac{\eta_t}{2}\xi_t,
\end{eqnarray}
where the last inequality follows again by using $\eta_t \geq \tilde{\eta}_t$.
%Note that by plugging Eq. \eqref{eq:lem:randUp:1} into  Eq. \eqref{eq:lem:randUp:2}, we get the first part of the bound in the lemma, i.e., $\E[h_{t+1}] \leq \E[h_t]$. 

%Moving to the second part of the lemma, 
Taking expectation over the random choice of $i_t$ in Eq. \eqref{eq:lem:randUp:2}, and plugging Lemma \ref{lem:fullUpdateBound}, we have that
\begin{eqnarray*}
\E_{i_t}[h_{t+1} \, | \, \X_t] &\leq & h_t - \frac{\eta_t}{2}h_t +  \frac{\eta_t^2\beta}{2}\min\{1, \frac{5\sqrt{\sqrt{2}\rank(\X^*)}}{\alpha^{1/4}}h_t^{1/4}, \frac{3\sqrt{2}}{\sqrt{\alpha}\lambda_{\min}(\X^*)}h_t^{1/2}\} + \frac{\eta_t}{2}\xi_t +  \frac{\eta_t}{2}\xi_t.
%\E_{i_t}[h_{t+1} \, | \, \X_t] &\leq & h_t - \frac{\eta_t}{2}h_t +  \frac{5\eta_t^2\beta\sqrt{\rank(\X^*)}}{2^{3/4}\alpha^{1/4}}h_t^{1/4} + \frac{\eta_t}{2}\xi_t.
\end{eqnarray*}

%Using the convexity of $f(\X)$ which implies that $(\X^*-\X_t)\bullet{}\nabla{}f(\X_t) \leq f(\X^*)-f(\X_t)$, and
%the strong convexity of $f(\X)$ which implies that $\Vert{\X_t-\X^*}\Vert_F \leq \sqrt{\frac{2h_t}{\alpha}}$, we have that 
%\begin{eqnarray*}
%\E_{i_t}[h_{t+1} \, | \, \X_t] \leq h_t - \frac{\eta_t}{2}h_t +  \eta_t^2\beta\min\{1, \frac{5\sqrt{\rank(\X^*)}h_t^{1/4}}{\alpha}\} .
%\end{eqnarray*}

Taking expectation over the randomness introduced on iterations $1,...,{t-}1$ we have that
\begin{eqnarray*}
\E[h_{t+1}] &\leq & \left({1 - \frac{\eta_t}{2}}\right)\E[h_t] +  \frac{\eta_t^2\beta}{2}\min\{1, \frac{5\sqrt{\sqrt{2}\rank(\X^*)}}{\alpha^{1/4}}\E[h_t^{1/4}], \frac{3\sqrt{2}}{\sqrt{\alpha}\lambda_{\min}(\X^*)}\E[h_t^{1/2}]\} + \eta_t\xi_t \\
&\leq & \left({1 - \frac{\eta_t}{2}}\right)\E[h_t] +  \frac{\eta_t^2\beta}{2}\min\{1, \frac{5\sqrt{\sqrt{2}\rank(\X^*)}}{\alpha^{1/4}}\E[h_t]^{1/4}, \frac{3\sqrt{2}}{\sqrt{\alpha}\lambda_{\min}(\X^*)}\E[h_t]^{1/2}\} + \eta_t\xi_t ,
%\E[h_{t+1}] \leq \left({1 - \frac{\eta_t}{2}}\right)\E[h_t] +  \eta_t^2\beta\E\left[{\min\{1, \frac{5\sqrt{\rank(\X^*)}h_t^{1/4}}{\alpha}\}}\right] \\
%\leq \left({1 - \frac{\eta_t}{2}}\right)\E[h_t] +  \eta_t^2\beta\min\{{1, \frac{5\sqrt{\rank(\X^*)}\E[h_t]^{1/4}}{\alpha}}\} .
\end{eqnarray*}
where the first inequality follows since the function $f(x,y,z)=\min\{x,y,z\}$ is concave, and thus the inequality follows from applying Jensen's inequality. Similarly, the second inequality follows since both functions $g(x) = x^{1/4}$, $q(x) = x^{1/2}$ are also concave on $(0,\infty)$.
%The last inequality follows since the single variable function $g(x)=x^{1/4}$ is concave on $(0,\infty)$
%, and thus the function $q(x) = \min\{1,c\cdot g(x)\}$ is also concave on $(0,\infty)$ for any positive scalar $c$. 
%, and thus the inequality above is a simple application of Jensen's inequality.
\end{proof}

\subsection{Proof of Theorem \ref{thm:main}}
We can now turn to prove our main theorem, Theorem \ref{thm:main}. The proof follows from deriving each one of the convergence rates in the theorem independently using the result of Lemma \ref{lem:randomUpdateBound}. This is done in the following Lemmas \ref{lem:recurs1},\ref{lem:recurs2}, \ref{lem:recurs3}. We then show that there exists a choice of step-size sequence and error-tolerance bounds for the eigenvector computations that satisfy all lemmas at once, and thus the theorem is obtained.

\begin{lemma}\label{lem:recurs1}
Let $C_,t_0$ be non-negative scalars that satisfy:
\begin{eqnarray*}
C \geq 18, \qquad  \frac{C}{2}-1 \geq t_0 \geq \frac{C}{6}-1 .
\end{eqnarray*}
Then if for all $t\geq 1$ we define $\eta_t = \frac{C}{3(t+t_0)}$, and we set $\xi_0 = \beta$ and $\forall t \geq 1: \, \xi_t = \frac{\beta{}C}{6(t+t_0)}$, it follows that all iterates of Algorithm \ref{alg:1} are feasible, and
\begin{eqnarray*}
\forall t\geq 1: \quad \E[h_t]\ \leq  \frac{\beta{}C}{t+t_0}.
\end{eqnarray*}
\end{lemma}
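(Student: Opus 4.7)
The plan is a standard induction on $t$, using Lemma \ref{lem:randomUpdateBound} with the trivial bound $\min\{1,\cdot,\cdot\}\leq 1$ on the error-reduction term, since here we target only the $O(\beta/t)$ rate. The conditions on $C$ and $t_0$ will end up enforcing (i) feasibility of the iterates via Observation \ref{observ:largeCoeff}, (ii) the base case, and (iii) the inductive step.

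First I would verify feasibility. Observation \ref{observ:largeCoeff} requires that the step-size sequence be monotone non-increasing and bounded by $2$; the given $\eta_t=C/(3(t+t_0))$ is clearly decreasing, and the bound $\eta_1\leq 2$ is equivalent to $t_0\geq C/6-1$, which is the assumed lower bound on $t_0$. Consequently, on every iteration each rank-one coefficient $a_i$ in the decomposition of $\X_t$ satisfies $a_i\geq \eta_t/2$, so the hypothesis of Lemma \ref{lem:randomUpdateBound} holds and every iterate remains in $\mS_d$.

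Next I would dispose of the base case $t=1$. Since $\X_1=\v_0\v_0^{\top}$ for a $\xi_0$-approximate leading eigenvector $\v_0$ of $-\nabla f(\x_0\x_0^{\top})$, applying $\beta$-smoothness at $\X_0=\x_0\x_0^{\top}$, using $\Vert{\v_0\v_0^{\top}-\x_0\x_0^{\top}}\Vert_F^2\leq 2$, then the approximate-optimality of $\v_0$ and the convexity bound $(\X^*-\X_0)\bullet\nabla f(\X_0)\leq f(\X^*)-f(\X_0)$ yields $h_1\leq \xi_0+\beta=2\beta$. The bound $2\beta\leq \beta C/(1+t_0)$ then reduces to $t_0\leq C/2-1$, which is the assumed upper bound on $t_0$.

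For the inductive step, suppose $\E[h_t]\leq \beta C/(t+t_0)$ and write $s:=t+t_0$. Lemma \ref{lem:randomUpdateBound} together with $\min\{1,\cdot,\cdot\}\leq 1$ gives
\begin{eqnarray*}
\E[h_{t+1}] \leq \left(1-\frac{\eta_t}{2}\right)\E[h_t]+\frac{\eta_t^2\beta}{2}+\eta_t\xi_t.
\end{eqnarray*}
Substituting $\eta_t=C/(3s)$ and $\xi_t=\beta C/(6s)$, the last two terms combine to $\beta C^2/(9s^2)$. Using the inductive hypothesis, the desired inequality $\E[h_{t+1}]\leq \beta C/(s+1)$ reduces, after dividing by $\beta C$ and rearranging, to
\begin{eqnarray*}
\frac{1}{s(s+1)}\leq \frac{C}{18 s^2}, \qquad\text{i.e.,}\qquad \frac{18 s}{s+1}\leq C.
\end{eqnarray*}
Since $18s/(s+1)<18$ for all $s\geq 1$, this is implied by $C\geq 18$, which is precisely the third hypothesis, completing the induction.

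The calculation is genuinely routine; the only mildly subtle point is checking the base case, because one has to use that $\xi_0=\beta$ (not the decaying value) in order to get $h_1\leq 2\beta$, which is exactly why the lemma singles out $\xi_0$. I do not expect any serious obstacle beyond bookkeeping.
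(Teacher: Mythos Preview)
Your proposal is correct and follows essentially the same route as the paper: induction via Lemma \ref{lem:randomUpdateBound} with the trivial bound on the $\min$, feasibility via Observation \ref{observ:largeCoeff}, and the same smoothness--convexity argument for the base case $h_1\leq 2\beta$. The only difference is that the paper absorbs the $\eta_t\xi_t$ term by assuming $\xi_t\leq \E[h_t]/6$ (turning the coefficient $1-\eta_t/2$ into $1-\eta_t/3$), whereas you substitute $\xi_t=\beta C/(6s)$ directly and combine it with $\eta_t^2\beta/2$ to get $\beta C^2/(9s^2)$; your handling is slightly cleaner and sidesteps the mild circularity in the paper's absorption step, but the resulting constraints on $C$ and $t_0$ are identical.
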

                                                                                                                                                                             
\begin{proof}
From Lemma \ref{lem:randomUpdateBound} we have that for all $t\geq 1$,
\begin{eqnarray*}
\forall t\geq 1: \quad \E[h_{t+1}] \leq \left({1 - \frac{\eta_t}{2}}\right)\E[h_t] +  \frac{\eta_t^2\beta}{2} +\eta_t\xi_t.
\end{eqnarray*}

We are going to assume throughout the proof that $\xi_t \leq \E[h_t]/6$. It thus follows that

\begin{eqnarray}\label{eq:recurs1:1}
\forall t\geq 1: \quad \E[h_{t+1}] \leq \left({1 - \frac{\eta_t}{3}}\right)\E[h_t] +  \frac{\eta_t^2\beta}{2}.
\end{eqnarray}

For all $t\geq 1$, define $v_t :=\beta^{-1}\E[h_t]$. Dividing both sides of Eq. \eqref{eq:recurs1:1} by $\beta$, we have that
\begin{eqnarray}\label{eq:recurs1:2}
\forall t\geq 1: \quad v_{t+1} \leq \left({1 - \frac{\eta_t}{3}}\right)v_t+  \frac{\eta_t^2}{2}.
\end{eqnarray}

We are going to prove by induction on $t$ that $v_t \leq \frac{C}{t+t_0}$ for suitable valus of $C, t_0$ and a sequence of step-sizes $\{\eta_t\}_{t\geq 1}$. Obviously for the base case $t=1$ to hold, we must restrict 
$\frac{C}{t_0+1} \geq v_1$.

Let us assume now that the induction hypothesis holds for some $t\geq 1$. 

Setting $\eta_t = \frac{C}{3(t+t_0)}$ in Eq. \eqref{eq:recurs1:2} we have that
\begin{eqnarray*}
v_{t+1} &\leq & v_t\left({1 - \frac{C}{9(t+t_0)}}\right) + \frac{C^2}{18(t+t_0)^2} \leq \frac{C}{t+t_0}\left({1 - \frac{C}{9(t+t_0)}}\right) + \frac{C^2}{18(t+t_0)^2} \\
& = & \frac{C}{t+t_0}\left({1 - \frac{C}{18(t+t_0)}}\right) = \frac{C}{t+t_0+1}\left({1 + \frac{1}{t+t_0}}\right)\left({1 - \frac{C}{18(t+t_0)}}\right) .
\end{eqnarray*}

Thus, choosing $C \geq 18$ gives:
\begin{eqnarray*}
v_{t+1} \leq  \frac{C}{t+1+t_0}\left({1+ \frac{1}{t+t_0}}\right)\left({1 - \frac{1}{t+t_0}}\right) < \frac{C}{t+1+t_0}
\end{eqnarray*}

as needed.

We can now set values for $C, t_0$ under the constraints that 
\begin{eqnarray}\label{eq:recurs1:3}
i.\, C \geq 18, \qquad ii. \, \frac{C}{t_0+1} \geq v_1, \qquad iii. \, \forall t\geq 1: \,\eta_t = \frac{C}{3(t+t_0)}\in[0,2] .
\end{eqnarray}

In order for our choice of step-sizes to satisfy the conditions of Observation \ref{observ:largeCoeff}, it must hold that $\{\eta_t\}_{t\geq 1} \subset[0,2]$. Since by definition this sequence is monotonic decreasing it suffices to show it for $\eta_1$. Thus we must require that $\frac{C}{3(1+t_0)} \leq 2$, which gives us the constraint $t_0 \geq \frac{C}{6}-1$.

It remains to deal with base case of the induction, i.e., we need to show that $v_1 = \beta^{-1}h_1 \leq \frac{C}{1+t_0}$ for our choice of $C,t_0$.

Recall that according to Algorithm \ref{alg:1} it holds that $\X_1 = \x_1\x_1^{\top}$, such that $\x_1=\EV(\nabla{}f(\x_0\x_0^{\top}))$, where $\x_0$ is some unit vector. Using the smoothness of $f$ we have that
\begin{eqnarray}\label{eq:recurs1:4}
h_1 &=& f(\x_1\x_1^{\top}) - f(\X^*) = f(\x_0\x_0^{\top} + \x_1\x_1^{\top} - \x_0\x_0^{\top}) -  f(\X^*) \nonumber \\
& \leq & f(\x_0\x_0^{\top}) - f(\X^*) + (\x_1\x_1^{\top} - \x_0\x_0^{\top}) \bullet \nabla{}f(\x_0\x_0^{\top}) +  \frac{\beta}{2}\Vert{\x_1\x_1^{\top}-\x_0\x_0^{\top}}\Vert_F^2 \nonumber\\
&\leq & f(\x_0\x_0^{\top}) - f(\X^*) + (\X^* - \x_0\x_0^{\top}) \bullet \nabla{}f(\x_0\x_0^{\top}) +  \frac{\beta}{2} \Vert{\x_1\x_1^{\top}-\x_0\x_0^{\top}}\Vert_F^2 + \xi_0 \nonumber\\
& \leq & \frac{\beta}{2} \Vert{\x_1\x_1^{\top}-\x_0\x_0^{\top}}\Vert_F^2 + \xi_0 \leq \beta + \xi_0,
\end{eqnarray}
where the second inequality follows from the choice of $\x_1$, and the third inequality follows from the convexity of $f(\X)$.

Setting $\xi_0 = \beta$, it follows that 
\begin{eqnarray*}
v_1 \leq \beta^{-1}\cdot 2\beta =  2.
\end{eqnarray*}
Thus we must require that $\frac{C}{1+t_0} \geq 2$, which gives us the constraint $t_0 \leq \frac{C}{2}-1$.

Thus, the conditions in Eq. \eqref{eq:recurs1:3} boils down to the following constraints:
\begin{eqnarray*}
C \geq 18, \qquad  \frac{C}{2}-1 \geq t_0 \geq \frac{C}{6}-1 .
\end{eqnarray*}

For $C,t_0$ that indeed satisfy these constraints we can thus conclude that
\begin{eqnarray*}
\forall t\geq 1: \quad \E[h_t] \leq  \beta{}v_t \leq  \frac{\beta{}C}{t+t_0} .
\end{eqnarray*}
\end{proof}

\begin{lemma}\label{lem:recurs2}
Let $C_,t_0$ be non-negative scalars that satisfy:
\begin{eqnarray*}
C \geq 30^{4/3}, \qquad  C^{3/4}-1 \geq t_0 \geq \frac{C^{3/4}}{6}-1 .
\end{eqnarray*}
Then if for all $t\geq 1$ we define $\eta_t = \frac{C^{3/4}}{3(t+t_0)}$, and set $\xi_0 = \beta$, $\forall t\geq 1: \, \xi_t = \frac{1}{6}\left({\frac{5C^{3/4}\beta\sqrt{\sqrt{2}\rank(\X^*)}}{\alpha^{1/4}(t+t_0)}}\right)^{4/3}$, it follows that all iterates of Algorithm \ref{alg:1} are feasible, and
\begin{eqnarray*}
\forall t\geq 1: \quad \E[h_t]\ \leq \left({\frac{5C^{3/4}\beta\sqrt{\sqrt{2}\rank(\X^*)}}{\alpha^{1/4}(t+t_0)}}\right)^{4/3}.
\end{eqnarray*}
\end{lemma}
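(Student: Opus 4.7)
The plan is to follow the structure of Lemma~\ref{lem:recurs1}'s proof, but replace the constant-error term $\eta_t^2\beta/2$ with the middle branch of the $\min$ supplied by Lemma~\ref{lem:randomUpdateBound} (the branch featuring $\sqrt{\rank(\X^*)}$ and a $v_t^{1/4}$ factor). Write $v_t := \E[h_t]$, and introduce the shorthand $B := 5\beta\sqrt{\sqrt{2}\rank(\X^*)}/\alpha^{1/4}$ and $D := B^{4/3}C$, so that the target bound reads $v_t \leq D/(t+t_0)^{4/3}$ and the prescribed $\xi_t$ is exactly $D/(6(t+t_0)^{4/3})$. Lemma~\ref{lem:randomUpdateBound} then gives
\[
v_{t+1} \leq (1-\eta_t/2)\,v_t + \tfrac{\eta_t^2 B}{2}\,v_t^{1/4} + \eta_t\xi_t.
\]
As in Lemma~\ref{lem:recurs1}, I restrict to the informative case $\xi_t \leq v_t/6$; if instead $v_t < 6\xi_t$, the target bound at time $t$ already holds with room, and one checks by a direct estimate that a single step cannot push $v_{t+1}$ above the (slightly larger) target. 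Under the assumption $\xi_t\leq v_t/6$, the $\eta_t\xi_t$ term is absorbed into $\eta_t v_t/6$, yielding the clean recursion $v_{t+1} \leq (1-\eta_t/3)v_t + (\eta_t^2 B/2)\,v_t^{1/4}$.

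I then proceed by induction on $t$. Substituting $\eta_t = C^{3/4}/(3(t+t_0))$ and the inductive hypothesis $v_t \leq D/(t+t_0)^{4/3}$ (so $v_t^{1/4} \leq D^{1/4}/(t+t_0)^{1/3}$), the key algebraic identity is $D^{1/4} = B^{1/3}C^{1/4}$, which makes $B\,D^{1/4}\,C^{3/2} = B^{4/3}C^{7/4} = D\,C^{3/4}$. The right-hand side therefore collapses to
\[
\frac{D}{(t+t_0)^{4/3}}\Bigl(1 - \frac{C^{3/4}}{18(t+t_0)}\Bigr),
\]
which mirrors exactly the contraction factor $(1-C/(18(t+t_0)))$ obtained in Lemma~\ref{lem:recurs1}. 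Writing $s := t+t_0$, closing the induction amounts to verifying $1 - C^{3/4}/(18s) \leq (s/(s+1))^{4/3}$. Using the Bernoulli-type inequality $(1-y)^{4/3}\geq 1-(4/3)y$ with $y = 1/(s+1)$, this reduces to $C^{3/4}/18 \geq 4/3$, i.e.\ $C^{3/4}\geq 24$, which is comfortably implied by the stated $C\geq 30^{4/3}$.

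The base case and feasibility verification follow exactly the template of Lemma~\ref{lem:recurs1}. Equation~\eqref{eq:recurs1:4} together with $\xi_0 = \beta$ gives $h_1 \leq 2\beta$, so the induction base requires $D/(1+t_0)^{4/3}\geq 2\beta$. Since the upper constraint $t_0 \leq C^{3/4}-1$ implies $(1+t_0)^{4/3}\leq C$, it suffices to check that $B^{4/3}\geq 2\beta$. This follows from $\rank(\X^*)\geq 1$ and the standard fact that an $\alpha$-strongly convex and $\beta$-smooth function satisfies $\alpha\leq\beta$: together they give $B \geq 5\cdot 2^{1/4}\,\beta^{3/4}$, which far exceeds $(2\beta)^{3/4}$. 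Finally, the step-size constraint $\eta_t\in[0,2]$ required by Observation~\ref{observ:largeCoeff} is equivalent to $\eta_1 = C^{3/4}/(3(1+t_0))\leq 2$, i.e.\ the lower constraint $t_0 \geq C^{3/4}/6 - 1$.

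The only genuinely delicate step is fixing the ansatz $D = B^{4/3}C$: one must pick the target constant and the step-size constant compatibly so that the two terms produced by the recursion, namely the contraction $(\eta_t/3)v_t$ and the error $(\eta_t^2 B/2)v_t^{1/4}$, end up scaling as matching multiples of $(t+t_0)^{-7/3}$ with a $1{:}2$ ratio. The $v_t^{1/4}$ factor forces the $4/3$ exponent on $t+t_0$ and fixes the exponent of $B$ in $D$ uniquely. Once this bookkeeping is in place, the remaining work is the same kind of constant-chasing as in Lemma~\ref{lem:recurs1}, with the stronger lower bound $C\geq 30^{4/3}$ accounting for the steeper $4/3$-power decay.
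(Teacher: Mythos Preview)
Your proposal is correct and follows essentially the same route as the paper's proof: same recursion from Lemma~\ref{lem:randomUpdateBound}, same absorption of $\eta_t\xi_t$ into the linear term, same induction with $\eta_t = C^{3/4}/(3(t+t_0))$, and the same base-case verification via $h_1\leq 2\beta$ together with $\alpha\leq\beta$. The only cosmetic differences are that the paper normalizes $v_t$ by $B^{4/3}$ up front (so its target is simply $C/(t+t_0)^{4/3}$) and closes the induction step via the expansion $(1+1/s)^{4/3} \leq (1+1/s)(1+1/(3s)) < 1+5/(3s)$, which produces exactly the threshold $C^{3/4}\geq 30$, whereas your Bernoulli-type bound $(1-y)^{4/3}\geq 1-(4/3)y$ needs only $C^{3/4}\geq 24$; one minor slip is your parenthetical ``(slightly larger) target'' --- the target at $t{+}1$ is smaller, but the case $v_t<6\xi_t$ is in fact handled by monotonicity of the recursion's right-hand side in $v_t$, and the paper is equally informal on this point.
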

                                                                                                                                                                             
\begin{proof}
From Lemma \ref{lem:randomUpdateBound} we have that for all $t\geq 1$,
\begin{eqnarray*}
\forall t\geq 1: \quad \E[h_{t+1}] \leq \left({1 - \frac{\eta_t}{2}}\right)\E[h_t] +  \frac{5\eta_t^2\beta\sqrt{\sqrt{2}\rank(\X^*)}}{2\alpha^{1/4}}\E[h_t]^{1/4} + \eta_t\xi_t.
\end{eqnarray*}

We are going to assume throughout the proof that $\xi_t \leq \E[h_t]/6$. It thus follows that
\begin{eqnarray}\label{eq:recurs2:1}
\forall t\geq 1: \quad \E[h_{t+1}] \leq \left({1 - \frac{\eta_t}{3}}\right)\E[h_t] +  \frac{5\eta_t^2\beta\sqrt{\sqrt{2}\rank(\X^*)}}{2\alpha^{1/4}}\E[h_t]^{1/4}.\end{eqnarray}

For all $t\geq 1$, define $v_t := \left({\frac{5\sqrt{\sqrt{2}\rank(\X^*)}\beta}{\alpha^{1/4}}}\right)^{-4/3}\E[h_t]$. Dividing both sides of Eq. \eqref{eq:recurs2:1} by $\left({\frac{5\sqrt{\sqrt{2}\rank(\X^*)}\beta}{\alpha^{1/4}}}\right)^{4/3}$, we have that
\begin{eqnarray}\label{eq:recurs2:2}
\forall t\geq 1: \quad v_{t+1} \leq \left({1 - \frac{\eta_t}{3}}\right)v_t+  \frac{\eta_t^2}{2}v_t^{1/4}.
\end{eqnarray}

We are going to prove by induction on $t$ that $v_t \leq \frac{C}{(t+t_0)^{4/3}}$ for suitable valus of $C, t_0$ and a sequence of step-sizes $\{\eta_t\}_{t\geq 1}$. Obviously for the base case $t=1$ to hold, we must restrict 
$\frac{C}{(t_0+1)^{4/3}} \geq v_1$.

Let us assume now that the induction hypothesis holds for some $t\geq 1$. 

Setting $\eta_t = \frac{C^{3/4}}{3(t+t_0)}$ in Eq. \eqref{eq:recurs2:2} we have that
\begin{eqnarray*}
v_{t+1} &\leq & v_t\left({1 - \frac{C^{3/4}}{9(t+t_0)}}\right) + \frac{C^{3/2}}{18(t+t_0)^2}v_t^{1/4}  \\
&\leq& \frac{C}{(t+t_0)^{4/3}}\left({1 - \frac{C^{3/4}}{9(t+t_0)}}\right) + \frac{C^{7/4}}{18(t+t_0)^{7/3}} \\
&=& \frac{C}{(t+t_0)^{4/3}}\left({1 - \frac{C^{3/4}}{18(t+t_0)}}\right) \\
&=&  \frac{C}{(t+1+t_0)^{4/3}}\left({1+\frac{1}{t+t_0}}\right)^{4/3}\left({1 - \frac{C^{3/4}}{18(t+t_0)} }\right) \\
&=&  \frac{C}{(t+1+t_0)^{4/3}}\left({1+\frac{1}{t+t_0}}\right)\left({1+\frac{1}{t+t_0}}\right)^{1/3}\left({1 - \frac{C^{3/4}}{18(t+t_0)} }\right)
\end{eqnarray*}

The single variable function $g(x) = x^{1/3}$ is concave on $(0,\infty)$, and thus, $g(1+x) \leq g(1) + g'(1)\cdot x = 1 + \frac{x}{3}$. Using this fact, we have that
\begin{eqnarray*}
v_{t+1} &\leq & \frac{C}{(t+1+t_0)^{4/3}}\left({1+\frac{1}{t+t_0}}\right)\left({1+\frac{1}{3(t+t_0)}}\right)\left({1 - \frac{C^{3/4}}{18(t+t_0)}}\right) \\
&<&  \frac{C}{(t+1+t_0)^{4/3}}\left({1+\frac{5}{3(t+t_0)}}\right)\left({1 - \frac{C^{3/4}}{18(t+t_0)}}\right) .
\end{eqnarray*}

Thus, choosing $C \geq (90/3)^{4/3}$ gives:
\begin{eqnarray*}
v_{t+1} \leq  \frac{C}{(t+1+t_0)^{4/3}}\left({1+ \frac{5}{3(t+t_0)}}\right)\left({1 - \frac{5}{3(t+t_0)}}\right) < \frac{C}{(t+1+t_0)^{4/3}},
\end{eqnarray*}

as needed.

We can now set values for $C, t_0$ under the constraints that 
\begin{eqnarray}\label{eq:recurs2:3}
i.\, C \geq 30^{4/3}, \qquad ii. \, \frac{C}{(t_0+1)^{4/3}} \geq v_1, \qquad iii. \, \forall t\geq 1: \,\eta_t = \frac{C^{3/4}}{3(t+t_0)}\in[0,2] .
\end{eqnarray}

As in the proof of Lemma \ref{lem:recurs1} it follows that constraining $t_0 \geq \frac{C^{3/4}}{6}-1$, will result in step-sizes that satisfy the conditions of Observation \ref{observ:largeCoeff}.

Moving to deal with the base case of the induction, again similarly to Lemma \ref{lem:recurs1}, we have that

\begin{eqnarray*}
v_1 &=& \left({\frac{\alpha^{1/4}}{5\beta\sqrt{\sqrt{2}\rank(\X^*)}}}\right)^{4/3}h_1 \leq \left({\frac{\alpha^{1/4}}{5\beta\sqrt{\sqrt{2}\rank(\X^*)}}}\right)^{4/3}\cdot 2\beta \\
&=& \left({\frac{\sqrt{2}\alpha^{1/4}}{5\beta^{1/4}\sqrt{\rank(\X^*)}}}\right)^{4/3} < 1,
\end{eqnarray*}
where the inequality follows since $\alpha \leq \beta$. Thus we must require that $\frac{C}{(1+t_0)^{4/3}} \geq 1$, which gives us the constraint $t_0 \leq C^{3/4}-1$.

Thus, the conditions in Eq. \eqref{eq:recurs2:3} boils down to the following constraints:
\begin{eqnarray*}
C \geq 30^{4/3}, \qquad  C^{3/4}-1 \geq t_0 \geq \frac{C^{3/4}}{6}-1 .
\end{eqnarray*}

For $C,t_0$ that indeed satisfy these constraints we can thus conclude that
\begin{eqnarray*}
\forall t\geq 1: \quad \E[h_t] \leq  \left({\frac{5\beta\sqrt{\rank(\X^*)}}{2^{3/4}\alpha^{1/4}}}\right)^{4/3}v_t 
\leq  \left({\frac{5C^{3/4}\beta\sqrt{\rank(\X^*)}}{2^{3/4}\alpha^{1/4}(t+t_0)}}\right)^{4/3} .
\end{eqnarray*}
\end{proof}

\begin{lemma}\label{lem:recurs3}
Let $C_,t_0$ be non-negative scalars that satisfy:
\begin{eqnarray*}
C \geq 2916, \qquad  C^{1/2}-1 \geq t_0 \geq \frac{C^{1/2}}{6}-1 .
\end{eqnarray*}
Then if for all $t\geq 1$ we define $\eta_t = \frac{C^{1/2}}{3(t+t_0)}$ and $\xi_0 = \beta$, $\forall t\geq 1: \, \xi_t = \frac{1}{6}\left({\frac{3\sqrt{2C}\beta}{\sqrt{\alpha}\lambda_{\min}(\X^*)(t+t_0)}}\right)^{2}$, it follows that all iterates of Algorithm \ref{alg:1} are feasible, and
\begin{eqnarray*}
\forall t\geq 1: \quad \E[h_t]\ \leq \left({\frac{3\sqrt{2C}\beta}{\sqrt{\alpha}\lambda_{\min}(\X^*)(t+t_0)}}\right)^{2}.
\end{eqnarray*}
\end{lemma}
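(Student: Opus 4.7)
The plan is to mirror the structure of Lemmas \ref{lem:recurs1} and \ref{lem:recurs2}, using the third branch of the minimum in Lemma \ref{lem:randomUpdateBound}. Starting from that lemma, I would first absorb the additive error term by imposing $\xi_t \leq \E[h_t]/6$, which turns the contraction factor from $(1-\eta_t/2)$ into $(1-\eta_t/3)$ and leaves the recursion
\begin{eqnarray*}
\E[h_{t+1}] \leq \left(1-\frac{\eta_t}{3}\right)\E[h_t] + \frac{\eta_t^2\beta}{2}\cdot\frac{3\sqrt{2}}{\sqrt{\alpha}\lambda_{\min}(\X^*)}\E[h_t]^{1/2}.
\end{eqnarray*}

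Next I would rescale by defining $v_t := \left(\frac{3\sqrt{2}\beta}{\sqrt{\alpha}\lambda_{\min}(\X^*)}\right)^{-2}\E[h_t]$, so that the recursion becomes the clean dimensionless form $v_{t+1} \leq (1-\eta_t/3)v_t + (\eta_t^2/2)v_t^{1/2}$. Then I would prove by induction the bound $v_t \leq C/(t+t_0)^2$ with the step size choice $\eta_t = C^{1/2}/(3(t+t_0))$. Plugging in yields, after one algebraic simplification,
\begin{eqnarray*}
v_{t+1} \leq \frac{C}{(t+t_0)^2}\left(1 - \frac{C^{1/2}}{18(t+t_0)}\right),
\end{eqnarray*}
which I would then rewrite relative to $(t+1+t_0)^2$ via the estimate $(1+1/(t+t_0))^2 \leq 1 + 3/(t+t_0)$ valid for $t+t_0 \geq 1$. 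This is exactly the place where the numerical constant enters: to close the induction we need $C^{1/2}/18 \geq 3$, i.e.\ $C \geq 2916$, so that the two factors cancel and the inductive step closes.

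It then remains to pin down the admissible range of $t_0$. The upper bound $t_0 \leq C^{1/2}-1$ comes from the base case: using the same argument as in the proofs of Lemmas \ref{lem:recurs1} and \ref{lem:recurs2} (smoothness of $f$ plus the choice $\xi_0=\beta$) one has $h_1 \leq 2\beta$, and after rescaling this gives $v_1 \leq \frac{\alpha\lambda_{\min}(\X^*)^2}{9\beta} \leq 1$, so the condition $C/(1+t_0)^2 \geq v_1$ is implied by $C/(1+t_0)^2 \geq 1$. The lower bound $t_0 \geq C^{1/2}/6-1$ comes from demanding $\eta_1 \leq 2$ so that the monotonicity/feasibility preconditions of Observation \ref{observ:largeCoeff} apply to the whole step-size sequence. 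Finally I would verify that the prescribed $\xi_t = \tfrac{1}{6}\bigl(\tfrac{3\sqrt{2C}\beta}{\sqrt{\alpha}\lambda_{\min}(\X^*)(t+t_0)}\bigr)^2$ is consistent with the blanket assumption $\xi_t \leq \E[h_t]/6$ used to derive the working recursion, which follows tautologically from the inductive bound itself.

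The main obstacle is the induction step: one has to pick the step-size scale $\eta_t \propto 1/(t+t_0)$ (and not $\propto 1/(t+t_0)^{\alpha}$ for some other exponent) so that the $v_t^{1/2}$ nonlinearity produces exactly a $1/(t+t_0)^2$ decay compatible with the linear contraction, and then to balance the constants so that the second-order error is absorbed into the contraction. The choice of $C^{1/2}$ scaling inside $\eta_t$ (rather than $C$, as in Lemma \ref{lem:recurs1}) is precisely what makes the quadratic-in-$\eta_t$ error term match the $1/(t+t_0)^2$ target rate, and the threshold $C\geq 2916$ is the minimal constant that leaves a favorable remainder after absorbing the $(1+1/(t+t_0))^2$ blow-up when shifting from $(t+t_0)^2$ to $(t+1+t_0)^2$. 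Everything else parallels the previous two lemmas line for line.
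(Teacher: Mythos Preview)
Your proposal is correct and follows the paper's proof essentially line for line: the same rescaling $v_t$, the same step-size choice $\eta_t = C^{1/2}/(3(t+t_0))$, the same inductive computation leading to the factor $(1 - C^{1/2}/(18(t+t_0)))$, the same estimate $(1+1/(t+t_0))^2 \leq 1+3/(t+t_0)$ yielding the threshold $C\geq 2916$, and the same derivation of the two constraints on $t_0$ from $\eta_1\leq 2$ and the base case $v_1<1$. The only remark is that your final sentence about $\xi_t \leq \E[h_t]/6$ ``following tautologically'' is phrased in the wrong direction (the inductive bound is an \emph{upper} bound on $\E[h_t]$); what actually makes it work is that one can fold the $\eta_t\xi_t$ term directly into the induction using the hypothesis $v_t \leq C/(t+t_0)^2$, which yields the same $(1-\eta_t/3)$ contraction --- but the paper's own proof is equally informal on this point.
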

                                                                                                                                                                             
\begin{proof}
From Lemma \ref{lem:randomUpdateBound} we have that for all $t\geq 1$,
\begin{eqnarray*}
\forall t\geq 1: \quad \E[h_{t+1}] \leq \left({1 - \frac{\eta_t}{2}}\right)\E[h_t] +  \frac{3\sqrt{2}\eta_t^2\beta}{2\sqrt{\alpha}\lambda_{\min}(\X^*)}\E[h_t]^{1/2} + \eta_t\xi_t.
\end{eqnarray*}
We are going to assume throughout the proof that $\xi_t \leq \E[h_t]/6$. It thus follows that
\begin{eqnarray}\label{eq:recurs3:1}
\forall t\geq 1: \quad \E[h_{t+1}] \leq \left({1 - \frac{\eta_t}{3}}\right)\E[h_t] +  \frac{3\sqrt{2}\eta_t^2\beta}{2\sqrt{\alpha}\lambda_{\min}(\X^*)}\E[h_t]^{1/2}.\end{eqnarray}
For all $t\geq 1$, define $v_t := \left({\frac{3\sqrt{2}\beta}{\sqrt{\alpha}\lambda_{\min}(\X^*)}}\right)^{-2}\E[h_t]$. Dividing both sides of Eq. \eqref{eq:recurs2:1} by $\left({\frac{3\sqrt{2}\beta}{\sqrt{\alpha}\lambda_{\min}(\X^*)}}\right)^{2}$, we have that
\begin{eqnarray}\label{eq:recurs3:2}
\forall t\geq 1: \quad v_{t+1} \leq \left({1 - \frac{\eta_t}{3}}\right)v_t+  \frac{\eta_t^2}{2}v_t^{1/2}.
\end{eqnarray}

We are going to prove by induction on $t$ that $v_t \leq \frac{C}{(t+t_0)^{2}}$ for suitable valus of $C, t_0$ and a sequence of step-sizes $\{\eta_t\}_{t\geq 1}$. Obviously for the base case $t=1$ to hold, we must restrict 
$\frac{C}{(t_0+1)^{2}} \geq v_1$.

Let us assume now that the induction hypothesis holds for some $t\geq 1$. 

Setting $\eta_t = \frac{C^{1/2}}{3(t+t_0)}$ in Eq. \eqref{eq:recurs2:2} we have that
\begin{eqnarray*}
v_{t+1} &\leq & v_t\left({1 - \frac{C^{1/2}}{9(t+t_0)}}\right) + \frac{C}{18(t+t_0)^2}v_t^{1/2} \\
&\leq &\frac{C}{(t+t_0)^{2}}\left({1 - \frac{C^{1/2}}{9(t+t_0)}}\right) + \frac{C^{3/2}}{18(t+t_0)^{3}} \\
&=&  \frac{C}{(t+1+t_0)^{2}}\left({1+\frac{1}{t+t_0}}\right)^{2}\left({1 - \frac{C^{1/2}}{18(t+t_0)} }\right) \\
&\leq&  \frac{C}{(t+1+t_0)^{2}}\left({1+\frac{3}{t+t_0}}\right)\left({1 - \frac{C^{1/2}}{18(t+t_0)} }\right) \\
\end{eqnarray*}

Thus, choosing $C \geq 2916$ gives:
\begin{eqnarray*}
v_{t+1} \leq  \frac{C}{(t+1+t_0)^{2}}\left({1+\frac{3}{t+t_0}}\right)\left({1 - \frac{3}{t+t_0} }\right) < \frac{C}{(t+1+t_0)^{2}},
\end{eqnarray*}

as needed.

We can now set values for $C, t_0$ under the constraints that 
\begin{eqnarray}\label{eq:recurs3:3}
i.\, C \geq 2916 \qquad ii. \, \frac{C}{(t_0+1)^{2}} \geq v_1, \qquad iii. \, \forall t\geq 1: \,\eta_t = \frac{C^{1/2}}{3(t+t_0)}\in[0,2] .
\end{eqnarray}

As in Lemma \ref{lem:recurs1}, it follows that in order for our step-sizes satisfy the conditions of Observation \ref{observ:largeCoeff}, we need to require that $t_0 \geq \frac{C^{1/2}}{6}-1$.

Also, for the base case of the induction, also similarly to Lemma \ref{lem:recurs1}, it holds that
\begin{eqnarray*}
v_1 \leq \left({\frac{\sqrt{\alpha}\lambda_{\min}(\X^*)}{3\sqrt{2}\beta}}\right)^{2}\cdot 2\beta = \left({\frac{\sqrt{2\alpha}\lambda_{\min}(\X^*)}{3\sqrt{2}\sqrt{\beta}}}\right)^{2} < 1
\end{eqnarray*}
where the second inequality follows since $\alpha \leq \beta$ and $\lambda_{\min}(\X^*) \leq 1$.
Thus in order to satisfy the constraint $v_1 \leq \frac{C}{(t_0+1)^2}$, it suffices to require $t_0 \leq \sqrt{C}-1$.

Thus, the conditions in Eq. \eqref{eq:recurs2:3} boils down to the following constraints:
\begin{eqnarray*}
C \geq 2916, \qquad  C^{1/2}-1 \geq t_0 \geq \frac{C^{1/2}}{6}-1 .
\end{eqnarray*}

For $C,t_0$ that indeed satisfy these constraints we can thus conclude that
We can thus conclude that
\begin{eqnarray*}
\forall t\geq 1: \quad \E[h_t] \leq   \left({\frac{3\sqrt{2}\beta}{\sqrt{\alpha}\lambda_{\min}(\X^*)}}\right)^{2}v_t \leq \left({\frac{3\sqrt{2C}\beta}{\sqrt{\alpha}\lambda_{\min}(\X^*)(t+t_0)}}\right)^{2}.
\end{eqnarray*}
\end{proof}

%\begin{theorem}\label{thm:main}[Main Theorem]
%Consider the sequence of step-sizes $\{\eta_t\}_{t\geq 1}$ defined by $\eta_t = 2/t$, and suppose that for any iteration $t$ it holds that 
%\begin{eqnarray*}
%\xi_t \leq \frac{1}{6}\min\{\frac{12\beta{}}{t} ,\left({\frac{20\beta\sqrt{\rank(\X^*)}}{\alpha^{1/4}t}}\right)^{4/3}, \left({\frac{6\sqrt{6}\beta}{\sqrt{\alpha}\lambda_{\min}(\X^*)t}}\right)^{2}\} .
%\end{eqnarray*} 
%Then, it follows that all iterations of Algorithm \ref{alg:1} are feasible, and
%\begin{eqnarray*}
%\forall t\geq 1: \quad \E[h_t] \leq \min\{\frac{12\beta{}}{t} ,\left({\frac{20\beta\sqrt{\rank(\X^*)}}{\alpha^{1/4}t}}\right)^{4/3}, \left({\frac{6\sqrt{6}\beta}{\sqrt{\alpha}\lambda_{\min}(\X^*)t}}\right)^{2}\}.
%\end{eqnarray*}
%\end{theorem}

We can now finally wrap-up the proof of Theorem \ref{thm:main}.

\begin{proof}
The proof is an immediate consequence of Lemmas \ref{lem:recurs1}, \ref{lem:recurs2}, \ref{lem:recurs3}, and the observation that the step-size $\eta_t = \frac{54}{3(t+8)} = \frac{18}{t+8}$, which implicitly sets $t_0 = 8$  in all of above lemmas and corresponds to setting $C=54$ for Lemma \ref{lem:recurs1}, $C= 54^{4/3}$ in Lemma  \ref{lem:recurs2}, and $C=2916$ in Lemma \ref{lem:recurs3}, satisfies all lemmas together.
\end{proof}

\section{Preliminary Empirical Evaluation}\label{sec:experiments}

In this section we provide preliminary empirical evaluation of our approach. We evaluate our method, along with other conditional gradient variants, on the task of matrix completion. For a detailed presentation of the setting and the application of the conditional gradient method to this problem, we refer the reader to \cite{Jaggi10}.

\paragraph{Setting}
The underlying optimization problem for the matrix completion task is the following:
\begin{eqnarray}\label{eq:matComp}
\min_{\Z\in\ball_{d_1,d_2}(\theta)}\{f(\Z) := \frac{1}{2}\sum_{l=1}^n\left({\Z\bullet\matE_{i_l,j_l} - r_l}\right)^2\},
\end{eqnarray}
where $\matE_{i,j}$ is the indicator matrix for the entry $(i,j)$ in $\reals^{d_1\times d_2}$, and $\{(i_l,j_l,r_l)\}_{l=1}^n\subset[d_1]\times[d_2]\times\reals$. That is, our goal is to find a matrix with bounded nuclear norm (which serves as a convex surrogate for bounded rank) which matches best the partial observations given by $\{(i_l,j_l,r_l)\}_{l=1}^n$.

Since the feasible set is the nuclear ball, we use the reduction specified in Subsection \ref{sec:nuclear2spectra} to transform it to optimization over the spectrahedron.

The objective function in Eq. \eqref{eq:matComp} is known to have a smoothness parameter $\beta$ with respect to $\Vert\cdot\Vert_F$, which satisfies $\beta = O(1)$, see for instance \cite{Jaggi10}. While the objective function in Eq. \eqref{eq:matComp} is not strongly convex, it is known that under certain conditions, the matrix completion problem exhibit proprieties very similar to strong convexity, in the sense of Eq. \eqref{eq:strongconvexdist} (which is indeed the only consequence of strong convexity that we use in our analysis), known as \textit{restricted strong convexity} \cite{Negahban09}.

\paragraph{Two modifications of Algorithm \ref{alg:1}}
We implemented our rank-one-regularized conditional gradient variant, Algorithm \ref{alg:1} (denoted ROR-CG in our figures) with two modifications. First, on each iteration $t$, instead of picking an index $i_t$ of a rank-one matrix in the decomposition of the current iterate at random according to the distribution $(a_1,a_2,...,a_k)$, we choose it in a greedy way, i.e., we choose the rank-one component that has the largest product with the current gradient direction.
While this approach is computationally more expensive, it could be easily parallelized since all dot-product computations are independent of each other.
Second, after computing the eigenvector $\v_t$ using the choice of $\eta_t$ prescribed in Theorem \ref{thm:main}, we apply a line-search, as detailed in \cite{Jaggi10}, in order to the determine the optimal step-size given the direction $\v_t\v_t^{\top}-\x_{i_t}\x_{i_t}^{\top}$.

\paragraph{Baselines}
As baselines for comparison we used the standard conditional gradient method with exact line-search for setting the step-size (denoted CG in our figures)\cite{Jaggi10}, and the conditional gradient with away-steps variant, recently studied in \cite{Jaggi13c, Beck15,Jaggi15} (denoted Away-CG in our figures). While the away-steps variant was studied in the context of optimization over polyhedral sets, and its formal improved guarantees apply only in that setting, the concept of away-steps still makes sense for any convex feasible set. This variant also allows the incorporation of an exact line-search procedure to choose the optimal step-size.

\paragraph{Datasets}
We have experimented with two well known datasets for the matrix completion task: the \textsc{MovieLens100k} dataset for which $d_1 = 943, d_2 = 1682, n = 10^5$, and the \textsc{MovieLens1M} dataset for which $d_1 = 6040 , d_2 = 3952 , n \approx 10^6$. The \textsc{MovieLens1M} dataset was further sub-sampled to contain roughly half of the observations.  We have set the parameter $\theta$ in Problem \eqref{eq:matComp} to $\theta=10000$ for the \textsc{ML100k} dataset, and $\theta=30000$ for the  \textsc{ML1M} dataset.
\\

Figure \ref{fig:1} presents the objective \eqref{eq:matComp} vs. the number of iterations executed, for all methods, on both datasets. Each graph is the average over 5 independent experiments \footnote{We ran several experiments since the leading eigenvector computation in each one of the CG variants is randomized.}. 
It can be seen that our approach indeed improves significantly over the baselines in terms of convergence rate, for the setting under consideration.

\begin{figure}
    \centering
    \begin{subfigure}[b]{0.485\textwidth}
        \includegraphics[width=\textwidth]{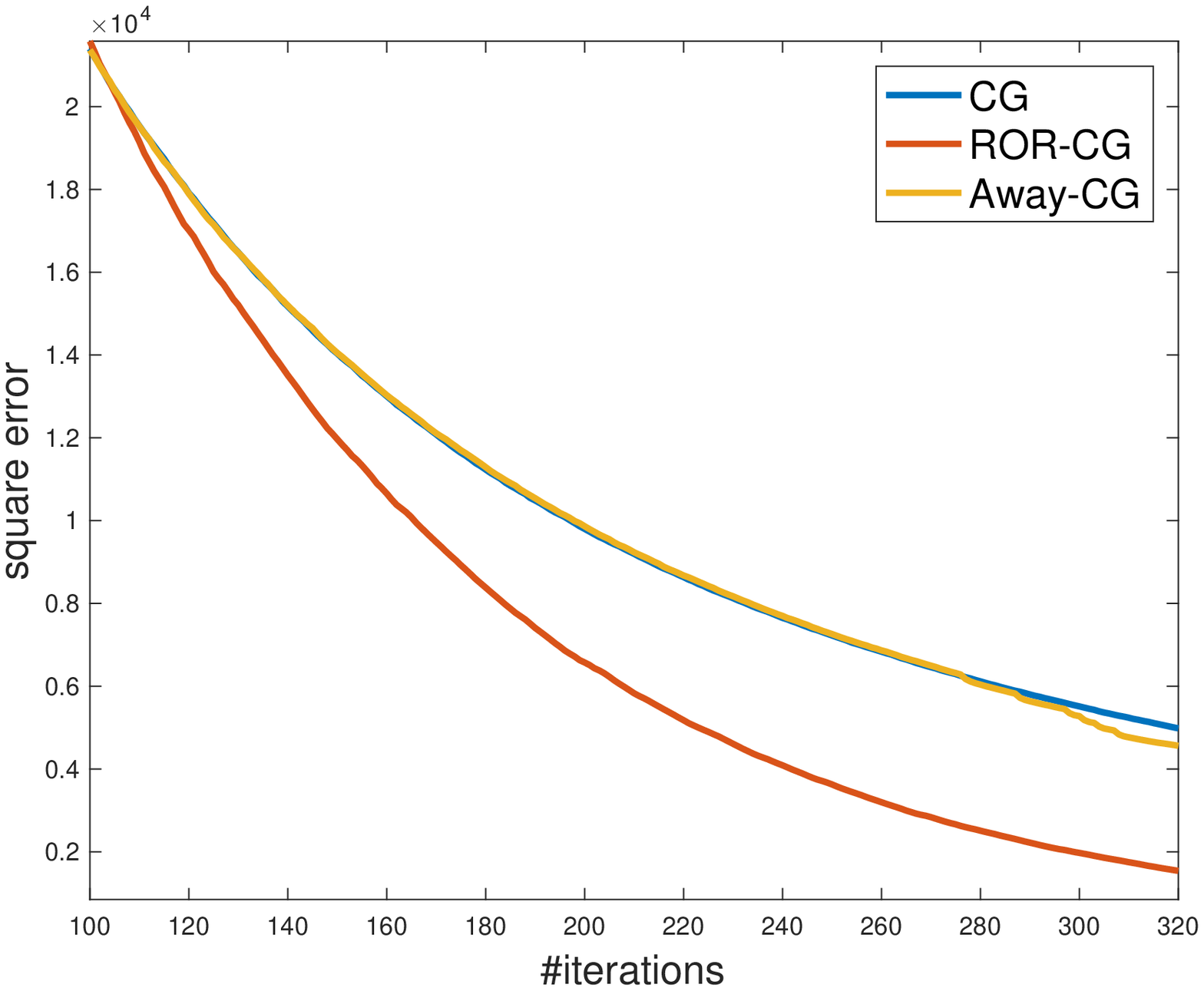}
        %\caption{A gull}
        %\label{fig:gull}
    \end{subfigure}
    ~ %add desired spacing between images, e. g. ~, \quad, \qquad, \hfill etc. 
      %(or a blank line to force the subfigure onto a new line)
    \begin{subfigure}[b]{0.485\textwidth}
        \includegraphics[width=\textwidth]{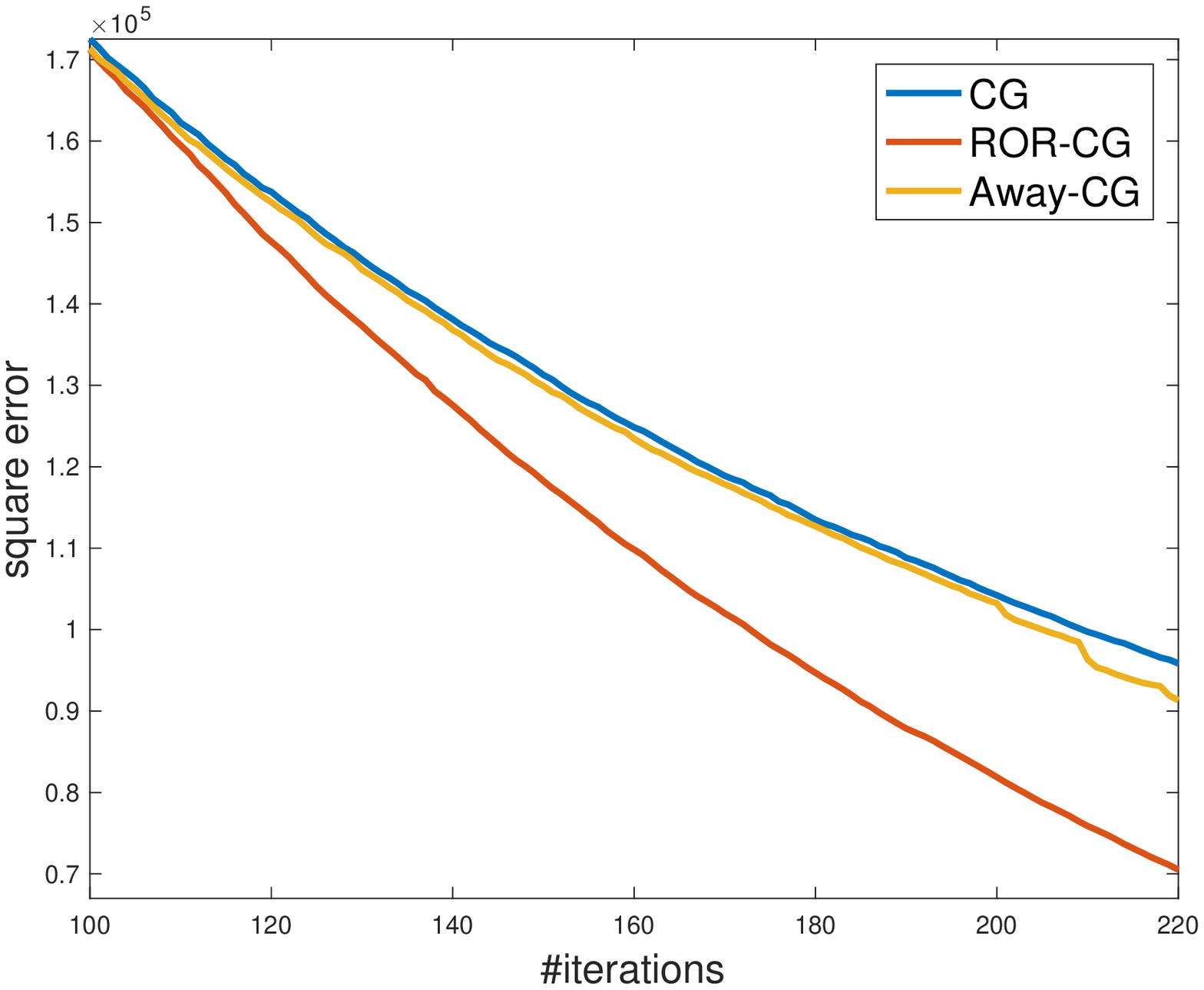}
        %\caption{A tiger}
        %\label{fig:tiger}
    \end{subfigure}
    \caption{Comparison between conditional gradient variants for solving the matrix completion problem on the \textsc{MovieLens100k} (left) and  \textsc{MovieLens1M} (right) datasets.}\label{fig:1}
\end{figure}

\begin{figure}[h!]
  \centering

\end{figure}

\bibliographystyle{plain}
\bibliography{bib}

\begin{thebibliography}{10}

\bibitem{Beck15}
Amir Beck and Shimrit Shtern.
\newblock Linearly convergent away-step conditional gradient for non-strongly
  convex functions.
\newblock {\em arXiv preprint arXiv:1504.05002}, 2015.

\bibitem{Candes09}
Emmanuel~J Cand{\`e}s and Benjamin Recht.
\newblock Exact matrix completion via convex optimization.
\newblock {\em Foundations of Computational mathematics}, 9(6):717--772, 2009.

\bibitem{Dudik12a}
Miroslav Dud\'{\i}k, Za\"{\i}d Harchaoui, and J{\'e}r{\^o}me Malick.
\newblock Lifted coordinate descent for learning with trace-norm
  regularization.
\newblock {\em Journal of Machine Learning Research - Proceedings Track},
  22:327--336, 2012.

\bibitem{FrankWolfe}
M.~Frank and P.~Wolfe.
\newblock An algorithm for quadratic programming.
\newblock {\em Naval Research Logistics Quarterly}, 3:149--154, 1956.

\bibitem{Freund15}
Robert~M Freund, Paul Grigas, and Rahul Mazumder.
\newblock An extended frank-wolfe method with" in-face" directions, and its
  application to low-rank matrix completion.
\newblock {\em arXiv preprint arXiv:1511.02204}, 2015.

\bibitem{Garber13b}
Dan Garber and Elad Hazan.
\newblock A linearly convergent conditional gradient algorithm with
  applications to online and stochastic optimization.
\newblock {\em CoRR}, abs/1301.4666, 2013.

\bibitem{Garber13}
Dan Garber and Elad Hazan.
\newblock Playing non-linear games with linear oracles.
\newblock In {\em 54th Annual {IEEE} Symposium on Foundations of Computer
  Science, {FOCS}}, 2013.

\bibitem{Garber15}
Dan Garber and Elad Hazan.
\newblock Fast and simple pca via convex optimization.
\newblock {\em arXiv preprint arXiv:1509.05647}, 2015.

\bibitem{GH15}
Dan Garber and Elad Hazan.
\newblock Faster rates for the frank-wolfe method over strongly-convex sets.
\newblock In {\em Proceedings of the 32nd International Conference on Machine
  Learning,{ICML}}, pages 541--549, 2015.

\bibitem{Gonen11}
Mehmet G{\"o}nen and Ethem Alpayd{\i}n.
\newblock Multiple kernel learning algorithms.
\newblock {\em The Journal of Machine Learning Research}, 12:2211--2268, 2011.

\bibitem{Dudik12b}
Za{\"{\i}}d Harchaoui, Matthijs Douze, Mattis Paulin, Miroslav Dud{\'{\i}}k,
  and J{\'{e}}r{\^{o}}me Malick.
\newblock Large-scale image classification with trace-norm regularization.
\newblock In {\em {IEEE} Conference on Computer Vision and Pattern Recognition,
  {CVPR}}, 2012.

\bibitem{Hazan08}
Elad Hazan.
\newblock Sparse approximate solutions to semidefinite programs.
\newblock In {\em 8th Latin American Theoretical Informatics Symposium,
  {LATIN}}, 2008.

\bibitem{Hazan12}
Elad Hazan and Satyen Kale.
\newblock Projection-free online learning.
\newblock In {\em Proceedings of the 29th International Conference on Machine
  Learning, {ICML}}, 2012.

\bibitem{Hazan16}
Elad Hazan and Haipeng Luo.
\newblock Variance-reduced and projection-free stochastic optimization.
\newblock {\em CoRR}, abs/1602.02101, 2016.

\bibitem{JaggiThesis}
Martin Jaggi.
\newblock Convex optimization without projection steps.
\newblock {\em CoRR}, abs/1108.1170, 2011.

\bibitem{Jaggi13b}
Martin Jaggi.
\newblock Revisiting frank-wolfe: Projection-free sparse convex optimization.
\newblock In {\em Proceedings of the 30th International Conference on Machine
  Learning, {ICML}}, 2013.

\bibitem{Jaggi10}
Martin Jaggi and Marek Sulovsk{\'{y}}.
\newblock A simple algorithm for nuclear norm regularized problems.
\newblock In {\em Proceedings of the 27th International Conference on Machine
  Learning, {ICML}}, 2010.

\bibitem{Sidrord15}
Chi Jin, Sham~M Kakade, Cameron Musco, Praneeth Netrapalli, and Aaron Sidford.
\newblock Robust shift-and-invert preconditioning: Faster and more sample
  efficient algorithms for eigenvector computation.
\newblock {\em arXiv preprint arXiv:1510.08896}, 2015.

\bibitem{EigenvaluesApprox}
J.~Kuczy\'{n}ski and H.~Wo\'{z}niakowski.
\newblock Estimating the largest eigenvalues by the power and lanczos
  algorithms with a random start.
\newblock {\em SIAM J. Matrix Anal. Appl.}, 13:1094--1122, October 1992.

\bibitem{Jaggi13c}
Simon Lacoste-Julien and Martin Jaggi.
\newblock An affine invariant linear convergence analysis for frank-wolfe
  algorithms.
\newblock {\em CoRR}, abs/1312.7864, 2013.

\bibitem{Jaggi15}
Simon Lacoste-Julien and Martin Jaggi.
\newblock On the global linear convergence of {F}rank-{W}olfe optimization
  variants.
\newblock In {\em Advances in Neural Information Processing Systems}, pages
  496--504, 2015.

\bibitem{Lan14}
Guanghui Lan and Yi~Zhou.
\newblock Conditional gradient sliding for convex optimization.
\newblock Technical report, Technical Report, 2014.

\bibitem{Lanckriet04}
Gert~RG Lanckriet, Nello Cristianini, Peter Bartlett, Laurent~El Ghaoui, and
  Michael~I Jordan.
\newblock Learning the kernel matrix with semidefinite programming.
\newblock {\em The Journal of Machine Learning Research}, 5:27--72, 2004.

\bibitem{Laue12}
S{\"{o}}ren Laue.
\newblock A hybrid algorithm for convex semidefinite optimization.
\newblock In {\em Proceedings of the 29th International Conference on Machine
  Learning, {ICML}}, 2012.

\bibitem{Polyak}
Evgeny~S Levitin and Boris~T Polyak.
\newblock Constrained minimization methods.
\newblock {\em USSR Computational mathematics and mathematical physics},
  6:1--50, 1966.

\bibitem{Negahban09}
Sahand Negahban, Bin Yu, Martin~J Wainwright, and Pradeep~K. Ravikumar.
\newblock A unified framework for high-dimensional analysis of m-estimators
  with decomposable regularizers.
\newblock In Y.~Bengio, D.~Schuurmans, J.~D. Lafferty, C.~K.~I. Williams, and
  A.~Culotta, editors, {\em Advances in Neural Information Processing Systems
  22}, pages 1348--1356. 2009.

\bibitem{Nesterov13}
Yurii Nesterov.
\newblock {\em Introductory lectures on convex optimization: A basic course},
  volume~87.
\newblock Springer Science \& Business Media, 2013.

\bibitem{Recht11}
Benjamin Recht.
\newblock A simpler approach to matrix completion.
\newblock {\em The Journal of Machine Learning Research}, 12:3413--3430, 2011.

\bibitem{ShalevShwartz11}
Shai Shalev{-}Shwartz, Alon Gonen, and Ohad Shamir.
\newblock Large-scale convex minimization with a low-rank constraint.
\newblock In {\em Proceedings of the 28th International Conference on Machine
  Learning, {ICML}}, 2011.

\bibitem{Shamir15}
Ohad Shamir.
\newblock A stochastic {PCA} and {SVD} algorithm with an exponential
  convergence rate.
\newblock In {\em Proceedings of the 32nd International Conference on Machine
  Learning, {ICML}}, 2015.

\bibitem{Weinberger05}
Kilian~Q Weinberger, John Blitzer, and Lawrence~K Saul.
\newblock Distance metric learning for large margin nearest neighbor
  classification.
\newblock In {\em Advances in neural information processing systems}, pages
  1473--1480, 2005.

\bibitem{Xing03}
Eric~P Xing, Andrew~Y Ng, Michael~I Jordan, and Stuart Russell.
\newblock Distance metric learning with application to clustering with
  side-information.
\newblock {\em Advances in neural information processing systems}, 15:505--512,
  2003.

\bibitem{Ying12}
Yiming Ying and Peng Li.
\newblock Distance metric learning with eigenvalue optimization.
\newblock {\em J. Mach. Learn. Res.}, 13(1):1--26, January 2012.

\bibitem{Zhang12}
Xinhua Zhang, Dale Schuurmans, and Yao-liang Yu.
\newblock Accelerated training for matrix-norm regularization: A boosting
  approach.
\newblock In {\em Advances in Neural Information Processing Systems}, pages
  2906--2914, 2012.

\end{thebibliography}

\appendix

%\section{Approximate Eigenvector Computations}

%\section{Lower Bound}

\end{document}